\DeclareSymbolFont{bbold}{U}{bbold}{m}{n}
\DeclareSymbolFontAlphabet{\mathbbold}{bbold}
\newcommand{\D}{\mathbf{d}}
\newcommand{\R}{\mathbf{r}}
\newcommand{\C}{\mathcal{C}}
\newcommand{\x}{\bowtie}
\def\st{\operatorname{star}}
\def\A{{\mathcal A}}
\def\C{{\mathcal C}}
\def\I{{\mathcal I}}
\def\Z{{\mathbb Z}}
\def\curlyA{{\mathcal A}}
\def\curlyB{{\mathcal B}}
\def\curlyF{{\mathcal F}}
\def\curlyH{{\mathcal H}}
\def\curlyR{{\mathcal R}}
\def\curlyL{{\mathcal L}}
\def\curlyT{{\mathcal T}}
\def\curlyK{{\mathcal K}}
\def\curlyP{{\mathcal P}}
\def\curlyQ{{\mathcal Q}}
\def\curlyZ{{\mathcal Z}}
\def\Q{{\mathcal Q}}
\def\P{{\mathcal P}}
\def\ra{\rightarrow}
\def\st{\operatorname{star}}
\def\varep{\varepsilon}
\def\ph{\varphi}
\def\<{\langle}
\def\>{\rangle}
\def\leq{\leqslant}
\def\geq{\geqslant}
\def\ol{\overline}
\def\fis{\operatorname{FIS}}
\def\fim{\operatorname{FIM}}
\def\Cay{\operatorname{Cay}}
\def\SchL{\operatorname{Sch^{\curlyL}}}
\def\Sq{\operatorname{Sq}}
\def\dom{\mathbf{d}}
\def\ran{\mathbf{r}}
\def\st{\operatorname{star}}
\def\relrho{\; \rho \;}
\def\relrhomin{\; \rho_{\min} \;}
\def\id{\mathbf{1}}
\newcommand{\rev}[1]{#1^{\mathrm{R}}}
\newcommand{\starsq}{\st_e^{\x}(\pi(\Sq(\curlyP),\curlyT))}
\newcommand{\eqnum}{\refstepcounter{equation}\textup{\tagform@{\theequation}}}
\newtheorem{thm}{Theorem}[section]
\newtheorem{prop}[thm]{Proposition}
\newtheorem{lemma}[thm]{Lemma}
\newtheorem{cor}[thm]{Corollary}
\newtheorem*{thm*}{Theorem}
\newtheorem*{cor*}{Corollary}
\theoremstyle{plain} 
\newcommand{\thistheoremname}{}
\newtheorem{genericthm}[thm]{\thistheoremname}
\theoremstyle{definition}
\newtheorem{defn}{Definition}[section]
\newtheorem*{defn*}{Definition}
\newtheorem*{prop*}{Proposition}
\newtheorem{ex}[thm]{Example}
\newtheorem*{ex*}{Example}
\newtheorem*{rem*}{Remark}
\numberwithin{equation}{section}
\begin{document}

\title[Rewriting for presentations of inverse monoids]{The algebra of rewriting for presentations of inverse monoids}

\author{N.D. Gilbert and E.A. McDougall}
\address{Department of Mathematics and the Maxwell Institute for the Mathematical Sciences, Heriot-Watt University, Edinburgh, EH14 4AS}
\email{N.D.Gilbert@hw.ac.uk \textrm{(corresponding author)}}


\thispagestyle{empty}

\subjclass[2010]{Primary: 20M18, Secondary: 20L05, 20M50}

\keywords{inverse monoid, presentation, groupoid, crossed module}

\maketitle

\begin{abstract}
We describe a formalism, using groupoids, for the study of rewriting for presentations of inverse monoids, that is based on the 
Squier complex construction for monoid presentations.  We introduce the class of pseudoregular groupoids, an example of which now arises as the fundamental groupoid of our version of the Squier complex.  A further key ingredient is the factorisation of the presentation map from a free inverse monoid as the composition of an idempotent pure map and an idempotent separating map. 
The relation module of a presentation is then defined as the abelianised kernel of this idempotent separating map.  We then use the properties of idempotent separating maps to derive a free presentation of the relation module.  The construction of its kernel - the module of identities - uses further facts about pseudoregular groupoids.
\end{abstract}




\section*{Introduction}
\label{intro}
Inverse semigroups (and inverse monoids) comprise a class of algebraic structures that sit naturally between
the class of semigroups and the class of groups, and are the natural candidates for semigroups that are
structurally closest  to groups.  However, inverse semigroup presentations do not sit quite so naturally
between semigroup presentations and group presentations, but have particular features that set them apart.
For example, a finitely generated free inverse semigroup is not finitely presented as a semigroup
\cite{Schein}, does not have a regular language of normal forms \cite{CutSol}, and no free inverse monoid
has context-free word problem \cite{Bro}.

In this paper we consider presentations of inverse monoids as rewriting systems, and attempt to replicate the
formalism for describing rewriting in monoid presentations due to Squier \cite{Sq,SOK}, and for group
presentations due to Cremanns-Otto \cite{CrOt} and Pride \cite{Pr2}.    Given a monoid presentation $\P$ with
generating set $A$, Squier associates to $\P$ a graph $\Gamma$ that has vertex set $A^*$ (the free monoid on $A$) and,
for all $p,q \in A^*$, has an edge from $puq$ to $pvq$ whenever $u=v$ is a relation in $\P$.  A path in $\Gamma$
therefore corresponds to a chain of equivalences betwen words in $A^*$  as consequences of the relations in $\P$, and a 
homotopy relation is imposed to identify paths corresponding to such equivalences that are naturally considered
to be essentially the same.  If this homotopy relation is finitely generated, then $\P$ is said to have \textit{finite
derivation type}.  For monoid presentations of groups, a theorem of Squier \cite[Theorem 4.3]{SOK} shows that if one finite presentation of $G$ has finite derivation type then
all finite presentations of $G$ do.  The main result of \cite{CrOt} is that finite derivation type (for groups) is equivalent to the
homological finiteness property $\text{FP}_3$.

An important component of the treatment of groups (given by monoid presentations) in \cite{CrOt}  is the way in
which free reductions are handled within the formalism.  An approach based on the categorical algebra of monoidal
groupoids and crossed modules, and refashioning the results of \cite{Pr2}, was given in \cite{Gi1}.  This approach is refined and
extended in \cite{GiMcD}.  In any similar approach to presentations of inverse monoids,
we encounter the problem of handling the Wagner congruence (see \cite{Steph}, for example), which defines the 
free inverse monoid as a quotient of a free monoid, and as mentioned above, is not finitely generated.
To get around this problem , given a presentation $\P = [X:\curlyR]$ of an inverse monoid $M$, we 
define a $2$--complex as in \cite{Ki,Pr1} whose edges encode the applications of relations, and whose $2$--cells
impose an appropriate homotopy relation, but we take as vertex set an inverse monoid $\curlyT$ constructed 
canonically from $\curlyP$.  The
presentation map $\fim(X) \ra M$ from the free inverse monoid on $X$ to $M$ factors through $\curlyT$,
which has $M$ as an idempotent separating image.  We then work with the fundamental groupoid of this $2$--complex:
the use of groupoids in this general setting originates with the work of Kilibarda \cite{Ki}. As a groupoid whose set of identities
is an inverse monoid, our fundamental groupoid is an example of a \textit{pseudoregular} groupoid, whose properties are
considered in section \ref{semipseudo}.  We then aim to connect the structure of the fundamental groupoid with the 
\textit{relation module} of $\P$: for group presentations the $\text{FP}_3$ condition is equivalent to finite presentation
of the relation module.

We define the relation module of $\P$ in section \ref{rel_mod_inv_mon}.  We take a more direct approach than in
earlier work of the first author \cite{G3}, since the relation module can now be naturally defined in terms of the map
$\curlyT \ra M$, and as in \cite{G3} we show that the relation module is isomorphic to the first homology of the Sch\"utzenberger graph of $(M,X)$.  In section \ref{sq_cx_inv_mon} we establish the connection between the relation module and the fundamental
groupoid of our $2$--complex. We use an intermediate construction of a free crossed module of groupoids,
and derive a free presentation of the relation module as an $M$--module.

\section{Background notions and notation}
\label{backnot}
Our basic reference for the theory of inverse semigroups is Lawson's book \cite{LwBook}.  Aspects of the theories of groups and 
inverse semigroups are considered by side-by-side in \cite{Me}.  We shall also make use of other algebraic constructions that may be less familiar, and we give brief introductions here.

\subsection{Groupoids}
\label{ord_gpds}
A {\em groupoid} $G$ is a small category in which every morphism
is invertible.  We consider a groupoid as an algebraic structure
(as in \cite{HiBook,LwBook}) whose elements are its morphisms, with a partial associative partial binary operation
given by composition of morphisms.
The set of vertices of $G$ is denoted $V(G)$, and for each vertex $x \in V(G)$ there exists an identity morphism $1_x$.
An element $g \in G$ has domain
$g \dom$ and range $g\ran$ in $V(G)$, with $gg^{-1} = 1_{g \dom}$ and $g^{-1}g = 1_{g \ran}$.  
For $e \in V(G)$ the {\em star} of $e$ in $G$ is the set
$\st_{e}(G) = \{ g \in G : g \dom =e \}$, and the {\em local group} at $e$ is the set
$G(e) = \{ g \in G : g \dom = e = g \ran \}$.  

\begin{ex}
Let $X$ be any set and let $\rho \subseteq X \times X$ be an equivalence relation on $X$.  Then $\rho$ is a groupoid
with vertex set $X$, and with partial composition $(a,b)(c,d)=(a,d)$ if $b=c$.  If $\rho = X \times X$ we obtain the
\textit{simplicial groupoid} on $X$.
\end{ex}

\begin{ex}
Let $X$ be a topological space and $A$ a subspace of $X$.  Then the set of fixed-end-point homotopy classes of
paths in $X$ with end-points in $A$ is a groupoid, the \textit{fundamental groupoid} $\pi(X,A)$.  We shall make use of 
the fundamental groupoid $\pi(X,A)$ of a $2$--complex $X$, with $A$ its $0$--skeleton, in section \ref{sq_cx_inv_mon}.
\end{ex}

\begin{ex}
\label{trace_gpd}
An inverse semigroup $S$ may be considered as a groupoid $\vec{S}$, with $V(\vec{S})$ equal to the set of 
idempotents $E(S)$ of $S$.  The groupoid composition $\circ$ on $\vec{S}$ is the \textit{restricted product}
on $S$: the composition $s \circ t$ is defined if and only if $s^{-1}s=tt^{-1}$, and then $s \circ t = st \in S$.
This point of view is an important theme in \cite{LwBook}.
\end{ex}

\subsection{Clifford Semigroups}\label{Clifford}
Clifford semigroups constitute a class of inverse semigroups that will be of importance in the description of relation modules
in section \ref{rel_mod_inv_mon}.

Let $(E,\leq)$ be a meet semilattice, and let $\{G_{e}:\,e\in E\}$ be a family of groups indexed by the elements of $E$.  For each pair $e,f \in E$ with $e \geq f$, let $\phi^e_f:G_{e}\rightarrow G_{f}$ be a group homomorphism, and suppose that the following two axioms hold:
\begin{itemize}
\item $\phi^e_e$ is the identity homomorphism on $G_e \,,$ 
\item if $e \geq f \geq g$ then $\phi^e_f \,\phi^f_g = \phi^e_g\,.$ 
\end{itemize}
The collection 
$$(G_{e},\phi^e_f)=(\{G_{e}:\,e\in E\},\{\phi^e_f:\,e,f\in E,\,f \leq e\})$$
is a \textit{presheaf of groups over $E$} and the group operations on the $G_e$ make the disjoint union $G = \bigsqcup_{e \in E} G_e$
into an inverse semigroup, called a \textit{Clifford} semigroup
over $E$, with binary operation
$$x \ast y= (x\phi^e_{ef})(y\phi^f_{ef}) \in G_{ef}\,,$$
where $x\in G_{e}$ and  $y\in G_{f}$.

Our description of relation modules in section \ref{rel_mod_inv_mon} also depends on the factorization of an inverse semigroup homomorphism
from a free inverse monoid as a composition of an idempotent pure map and an idempotent separating map.  We recall
the definitions of these types of map here:
\begin{defn}
\label{id_pure_sep}
\leavevmode
\begin{enumerate}
\item[(a)] A congruence $\rho$ on an inverse semigroup $T$ is said to be \textit{idempotent pure} if $a \in T$ and $a \relrho e$ for some $e \in E(T)$ imply that $a \in E(T)$.
\item[(b)] A congruence $\rho$ on an inverse semigroup $T$ is said to be \textit{idempotent separating} if $e,f \in E(T)$ 
and $e \relrho f$ imply that $e=f$.
\end{enumerate}
Any inverse semigroup homomorphism $\phi : T \ra S$ 
induces a congruence $\chi_{\phi}$ on $T$ by
\[ a \; \chi_{\phi} \; b \; \iff \; a \phi = b \phi \,. \]
We say that $\phi$ is idempotent pure (respectively, idempotent separating) if $\chi_{\phi}$ has this property.
The \textit{kernel} of an inverse semigroup homomorphism $\phi : T \ra S$ is the preimage of $E(S)$:
\[ \ker \phi = \{ a \in T : a \phi \in E(S) \}. \]
\end{defn}
We recall that any inverse semigroup $T$ has a maximum group image $\widehat{T}$ and that $T$ is \textit{$E$--unitary}
if the quotient map $\sigma_T : T \ra \widehat{T}$ is idempotent pure.  Free inverse monoids are $E$--unitary.  See \cite[section 2.4]{LwBook} for more properties of 
$E$--unitary inverse semigroups.

The connection that we need between Clifford semigroups and idempotent separating maps is given by the following result (see \cite[Lemma 5.2.2]{LwBook}).

\begin{prop}
\label{ker_of_idsep}
If a homomorphism  $\phi : T \ra S$ of inverse semigroups is idempotent separating
then its kernel is a Clifford semigroup over $E(T)$.
\end{prop}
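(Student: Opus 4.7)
The plan is first to extract the key consequence of the idempotent separating hypothesis, namely that every $a \in \ker \phi$ satisfies $aa^{-1} = a^{-1}a$. Since $a\phi$ is an idempotent of $S$, it is self-inverse, and hence
\[ (aa^{-1})\phi = (a\phi)(a\phi)^{-1} = (a\phi)^2 = a\phi = (a^{-1}a)\phi. \]
As $aa^{-1}$ and $a^{-1}a$ are idempotents of $T$ with the same image under $\phi$, idempotent separability forces $aa^{-1} = a^{-1}a$ inside $T$.

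Next I would check that $\ker \phi$ is an inverse subsemigroup of $T$ whose idempotent set is exactly $E(T)$. Closure under inverses is immediate from $(a^{-1})\phi = (a\phi)^{-1} = a\phi \in E(S)$, and closure under products follows because $(ab)\phi = (a\phi)(b\phi)$ is a product of commuting idempotents of $S$, hence itself idempotent. Clearly $E(T) \subseteq \ker \phi$, and any idempotent of $\ker \phi$ is automatically an idempotent of $T$, so $E(\ker \phi) = E(T)$.

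It is then a standard structural fact that an inverse semigroup in which $aa^{-1} = a^{-1}a$ for every element is a Clifford semigroup, the partition into groups being by the value of this common idempotent. Applied here, $\ker \phi = \bigsqcup_{e \in E(T)} K_e$, where $K_e = \{a \in \ker \phi : aa^{-1} = e\}$ is a subgroup of the local group $T(e)$. For $f \leq e$ the presheaf map $\phi^e_f : K_e \to K_f$ should be defined by $a \mapsto fa$: this is well defined, since $(fa)(fa)^{-1} = faa^{-1}f = fef = f$ and $(fa)\phi = f\phi \in E(S)$, and the homomorphism, functoriality ($\phi^e_f\phi^f_g = \phi^e_g$ and $\phi^e_e = \idmap$), and compatibility-with-$T$-multiplication checks all reduce to the observation that idempotents of $T$ commute with elements of $\ker \phi$, itself a direct consequence of $aa^{-1} = a^{-1}a$.

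The main obstacle — and really the only step that uses the hypothesis — is the deduction $aa^{-1} = a^{-1}a$ in the first paragraph. Once that is in hand the Clifford structure is essentially forced on us, since the class of inverse semigroups in which this identity holds coincides with the class of Clifford semigroups, and the indexing semilattice here is the already-given $E(T)$.
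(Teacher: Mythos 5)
Your argument is correct. The paper itself offers no proof of this proposition --- it simply cites Lawson's \emph{Inverse Semigroups}, Lemma 5.2.2 --- and your write-up is essentially the standard argument behind that citation: the idempotent separating hypothesis forces $aa^{-1}=a^{-1}a$ for every $a\in\ker\phi$ (since both idempotents map to $(a\phi)^2=a\phi$), so the kernel is an inverse semigroup that is a union of groups indexed by $E(T)$, hence a Clifford semigroup, with structure maps $a\mapsto fa$. All the individual verifications you sketch (closure of $\ker\phi$, $E(\ker\phi)=E(T)$, centrality of idempotents in a union-of-groups inverse semigroup, well-definedness of $\phi^e_f$) check out.
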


\subsection{Sch\"utzenberger graphs}
We shall use \textit{left} Sch\"utzenberger graphs in this paper.   Let $S$ be an  inverse semigroup generated by a set 
$X$.  There exists a \textit{presentation map} $\theta : \fis(X) \ra S$ from the free inverse semigroup on $X$ to $S$. 
The (left) Sch\"utzenberger graph $\SchL(S,X)$ has vertex set $S$, and for $x \in X$ and $s \in S$, an
$x$--labelled edge from
$s$ to $(x \theta)s$ whenever $(x^{-1}x) \theta \geq ss^{-1}$.  The connected component $\SchL(S,X,e)$ containing
the idempotent $e$ is the full subgraph on the vertex set $L_e$, the $\curlyL$--class of $e$ in $S$.
Some examples of Sch\"utzenberger graphs may be found in section \ref{relmod_egs}.

\subsection{Modules for inverse semigroups}
\label{mod_for_is}
Modules for inverse semigroups were first defined by Lausch \cite{Lau}.  

\begin{defn}
\label{Lau_mod}
Let $S$ be an inverse semigroup with semilattice of idempotents 
$E(S)$.  Consider a Clifford semigroup $\A = (A_e, \alpha^e_f)$ (see section \ref{Clifford}), in which each $A_e$
is an additively written abelian group with identity $0_e$.  The disjoint union $A = \bigsqcup_{e \in E(S)} A_e$ is a commutative inverse semigroup under the operation
\[ a \oplus b = a \alpha^e_{ef} + b \alpha^f_{ef} \]
for $a \in A_e$ and $b \in A_f$.  Then  $\A$ is an \textit{$S$--module}
\cite[section 2]{Lau} if there exists a map $A \times S \ra A$, written $(a,s) \mapsto a \lhd s$, such that 
\begin{enumerate}[(i)]
\item $(a \oplus b) \lhd s = (a \lhd s) \oplus (b \lhd s)$ for all $a,b \in A$ and $s \in S$,
\item $a \lhd st = (a \lhd s) \lhd t$ for all $a \in A$ and $s,t \in S$,
\item $a \lhd e = a \oplus 0_e$ for all $a \in A$ and $e \in E(S)$,
\item $0_e \lhd s = 0_{s^{-1}es}$ for all $e \in E(S)$ and $s \in S$.
\end{enumerate}
A \textit{free} $S$--module $\curlyF = (F_e, \phi^e_f)$ has as basis a family of sets $\curlyB = \{ B_e : e \in E(S)\}$, and 
$F_e$ is the free abelian group on the set
\[ \{ (b,s) : b \in B_f, s \in S, f \geq ss^{-1}, s^{-1}s  = e \} \,, \]
with $(b,s) \phi^e_{e'} = (b,se')$ and 
with $S$--action defined by $(b,s) \lhd t = (b,st)$, see \cite[section 3]{Lau}.
\end{defn}

\begin{lemma}
\label{ker_is_mod}
Let $\psi : T  \ra  S$ be a surjective idempotent separating homomorphism with kernel $K$.  Then
$\curlyK = \bigsqcup_{e \in E(T)} K_e^{ab}$ is an $S$--module, with $S$--action defined by 
$k \lhd s= t^{-1}kt$ for any $t \in T$ with $s=t \psi$.
\end{lemma}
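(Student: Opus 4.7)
By Proposition~\ref{ker_of_idsep}, $K = \bigsqcup_{e \in E(T)} K_e$ is a Clifford semigroup over $E(T)$, with structure maps $\alpha^e_f : K_e \to K_f$ ($f \leq e$); these descend to the abelianisations $K_e^{ab}$ and make $\curlyK$ into a Clifford semigroup of abelian groups. Since $\psi$ is surjective and idempotent separating, it restricts to a semilattice isomorphism $E(T) \cong E(S)$, which I shall use silently to identify idempotents.

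The main step is well-definedness of the action. For $k \in K_e$ and $t \in T$ with $t\psi = s$, the image $(t^{-1}kt)\psi = s^{-1}(e\psi)s$ is idempotent in $S$, so $t^{-1}kt \in K_{e'}$, where $e' \in E(T)$ is the unique preimage of $s^{-1}(e\psi)s$, independent of $t$. If $t_1\psi = t_2\psi = s$, idempotent separation forces $t_1 t_1^{-1} = t_2 t_2^{-1}$ and $t_1^{-1}t_1 = t_2^{-1}t_2$, so $n := t_1 t_2^{-1}$ lies in $K_g$ with $g = t_1 t_1^{-1}$ and $t_1 = n t_2$. Therefore
\[
t_1^{-1} k t_1 \;=\; t_2^{-1} (n^{-1} k n) t_2 .
\]
Inside the Clifford semigroup, $n^{-1}kn$ lies in $K_{ge}$ and is a conjugate of $\alpha^e_{ge}(k)$ by $\alpha^g_{ge}(n)$; conjugation by $t_2$ is a group homomorphism $K_{ge}\to K_{e'}$ (because $xg = x$ for $x \in K_{ge}$, as $ge \leq g$) and sends $\alpha^e_{ge}(k)$ to $t_2^{-1}kt_2$. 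Thus modulo commutators in $K_{e'}$ we have $t_1^{-1}kt_1 = t_2^{-1}kt_2$, giving equality in $K_{e'}^{ab}$.

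The four axioms of Definition~\ref{Lau_mod} then hold already at the level of $K$. Axiom (ii) is the identity $(t_1 t_2)^{-1} k (t_1 t_2) = t_2^{-1}(t_1^{-1} k t_1) t_2$ combined with $(t_1 t_2)\psi = st$; axiom (iv) uses $(t^{-1} e_T t)^2 = t^{-1} e_T t$ to identify $t^{-1} e_T t$ as the unique idempotent over $s^{-1}es$; axiom (iii) uses centrality of $e_T$ in $K$ to reduce $e_T^{-1} \tilde a e_T = \tilde a e_T$, which is the Clifford product representing $\tilde a \oplus 0_e$; and for axiom (i), centrality of $f_t := tt^{-1}$ (an element of $E(K) \subseteq K$ since $f_t\psi = ss^{-1}$) gives
\[
(t^{-1}\tilde a t)(t^{-1}\tilde b t)\bigl[t^{-1}(\tilde a \tilde b) t\bigr]^{-1}
= t^{-1} \tilde a f_t \tilde b f_t \tilde b^{-1} \tilde a^{-1} t
= t^{-1}(ef) t ,
\]
which is idempotent in $T$ with image $s^{-1}(ef)\psi s$ and hence equals the identity of $K_{g_1}$ by idempotent separation. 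The chief obstacle is the well-definedness argument: it is the inverse-semigroup analogue of $N^{ab}$ being a $G/N$-module under conjugation, but the analysis is subtler because conjugation by $K$-elements can move between different Clifford components, and one must verify that this movement is tracked exactly by the structure maps $\alpha^e_f$ and absorbed by the abelianisation.
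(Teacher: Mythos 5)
Your argument is correct, and in fact the paper states Lemma \ref{ker_is_mod} with no proof at all, so there is nothing of the authors' to compare it against: you have supplied the verification they left to the reader. The two structural facts you lean on --- that idempotent separation makes $\psi$ restrict to a bijection $E(T) \to E(S)$ (so the target component of $t^{-1}kt$ and the elements $tt^{-1}$, $t^{-1}t$ depend only on $s$), and that idempotents are central in the Clifford semigroup $K$ (so conjugation by $t$ is a homomorphism on each $K_e$ and the discrepancy between two lifts is an inner automorphism of $K_{e'}$, killed by abelianisation) --- are exactly the right ones, and your checks of axioms (i)--(iv) of Definition \ref{Lau_mod} go through as written.
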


\subsection{Crossed modules of groupoids}
\label{cm_of_gpds}
We now present the rudiments of the theory of crossed modules of groupoids.  For further information we refer to \cite{BHS},
and for the use of crossed modules in the theory of group presentations to \cite{BrHu}.

\begin{defn}
Let $G$ be a groupoid with vertex set $V = V(G)$.   Then a crossed $G$-module
$$C \xrightarrow{\partial} G \rightrightarrows V$$
consists of:
\begin{enumerate}
\item a disjoint union of groups $C = \bigsqcup_{e \in V} C_e$,  indexed by $V$,
\item a homomorphism $\partial$ of groupoids,
\item an action of $G$ on $C$, denoted $(c,g) \mapsto c^g$, such that an edge $g \in G$ with $g \D=e$ and $g \R = f$, acts on $c \in C_e$ with $c^g \in C_f$.
\end{enumerate}
The action of $G$ on $C$ satisfies
\begin{align}
\label{cm_gpd_1} & (c^g)\partial = g^{-1} (c \partial) g \quad \text{whenever} \;  c^g \; \text{is defined,} \\
\label{cm_gpd_2} & c^{a\partial} = a^{-1}ca \quad \text{where, for some} \;e \in V, \; a,c \in C_e.
\end{align}
\end{defn}

\begin{defn}
Consider a crossed $G$--module $$C \xrightarrow{\partial} G \rightrightarrows V$$ along with a set $R$ and a function $\omega:R \rightarrow G$ such that $\omega \D = \omega \R$.  Then $C$  is said to be the \textit{free crossed $G$-module on $\omega$} if for any crossed $G$--module  $$C' \xrightarrow{\partial'} G \rightrightarrows V$$ and function $\sigma : R \rightarrow C'$ such that $\omega=\sigma \partial'$ there exists a unique morphism of crossed $G$--modules  $\phi : C \rightarrow C'$ such that $\partial = \phi \partial'$.
\end{defn}

We sketch the construction of free crossed modules: see \cite[section 7.3]{BHS}.

\begin{prop}{\cite[Proposition 7.3.7]{BHS}}
\label{free_pcm_gpds}
Given a groupoid $G$, a set $R$ and a function $\omega: R \rightarrow G$ such that $\omega \D = \omega \R$, then a free crossed $G$-module on $\omega$ exists and is unique up to isomorphism.
\end{prop}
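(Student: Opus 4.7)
The plan is to get uniqueness for free from the universal property, and then to build the free crossed $G$--module on $\omega$ by first constructing a free precrossed $G$--module and then quotienting by a ``Peiffer subgroupoid'' to force axiom \eqref{cm_gpd_2}. Uniqueness is standard: two free objects on $\omega$ produce, via each other's universal property, mutually inverse morphisms of crossed $G$--modules that both fix the generating data.

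For the construction, I would set, for each vertex $v \in V(G)$,
\[ B_v = \{ (r,g) : r \in R,\ g \in G,\ g\D = \omega(r)\D,\ g\R = v \}, \]
and let $\widetilde{C}_v$ be the free group on $B_v$, with the generator $(r,g)$ serving as a formal name for the conjugate $g^{-1}\omega(r)g$. Define $\widetilde{\partial}: \widetilde{C}_v \to G(v)$ on generators by $(r,g)\widetilde{\partial} = g^{-1}\omega(r)g$ and extend multiplicatively, and define a $G$--action on generators by $(r,g)^h = (r, gh)$ whenever $h\D = v$. A direct calculation shows that the resulting precrossed $G$--module $\widetilde{C} \xrightarrow{\widetilde{\partial}} G$ satisfies \eqref{cm_gpd_1}, and is universal among precrossed $G$--modules equipped with a function $R \to \widetilde{C}$ whose composite with $\widetilde{\partial}$ is $\omega$.

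To obtain a crossed module, for each $v$ let $P_v \trianglelefteq \widetilde{C}_v$ be the normal closure of all Peiffer elements $c^{a\widetilde{\partial}} \cdot (a^{-1}ca)^{-1}$ with $a, c \in \widetilde{C}_v$, and set $C_v = \widetilde{C}_v / P_v$. Two things need checking: that $P = \bigsqcup_v P_v$ lies in $\ker \widetilde{\partial}$ (immediate from \eqref{cm_gpd_1} in $\widetilde{C}$), and that $P$ is $G$--invariant, so that the action, the boundary, and the generating map $R \to \widetilde{C}$ all descend to $C$. The universal property of $C$ is then established as follows: given any crossed $G$--module $C' \xrightarrow{\partial'} G$ and $\sigma: R \to C'$ with $\omega = \sigma \partial'$, the assignment $(r,g) \mapsto (r\sigma)^g$ extends uniquely to a morphism $\widetilde{\phi}: \widetilde{C} \to C'$ of precrossed $G$--modules, and because $C'$ already satisfies \eqref{cm_gpd_2}, this morphism annihilates every Peiffer element and therefore factors uniquely through $C$.

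The main obstacle is the bookkeeping around how the $G$--action interacts with the Peiffer subgroupoid: because the action changes the vertex, one must check that translating a Peiffer element at $v$ by some $h$ with $h\D = v$ produces another Peiffer element at $h\R$, and similarly that the induced boundary and action on the quotient still obey \eqref{cm_gpd_1} and \eqref{cm_gpd_2}. These verifications are routine but require keeping careful track of domains and ranges, and they make up the technical content of the argument given in \cite[section 7.3]{BHS}.
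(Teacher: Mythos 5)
Your proposal is correct and follows essentially the same route as the paper: build the free precrossed $G$--module on the pairs $(r,g)$ with boundary $(r,g)\mapsto g^{-1}(r\omega)g$ and action $(r,g)^h=(r,gh)$, then quotient by the Peiffer subgroup (your generators $c^{a\widetilde{\partial}}(a^{-1}ca)^{-1}$ generate the same normal subgroup as the paper's $\langle u,v\rangle=u^{-1}v^{-1}uv^{u\delta}$), and get uniqueness from the universal property. The verifications you defer --- $G$--invariance of the Peiffer subgroup and its containment in $\ker\widetilde{\partial}$ --- are exactly the ones the paper also asserts without detail, deferring to \cite[section 7.3]{BHS}.
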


\begin{proof}
For each $e \in V(G)$ we define $R_e = \{ s \in R : (s \omega)\R = e = (s \omega)\D \}$ and
\[ (R \between G)_e = \{ (s, g) \in R \times G : r \in R_{gg^{-1}}, g^{-1}g=e \} \,. \]
We define $F_e$ to be the free group on $(R \between G)_e$, and $\mathcal{F}= \bigsqcup_{e \in V} F_e$. Then we have
a map $\delta: \mathcal{F} \rightarrow G$, defined on generators by $(s, g) \mapsto g^{-1}(s\omega)g$, and an action of $G$ on $\mathcal{F}$, defined on generators by $(s, g)^h = (s, gh)$ whenever $g^{-1}g=hh^{-1}$.
We let $P_e$ denote the
subgroup of $F_e$ generated by the elements of the form $\langle u,v \rangle = u^{-1}v^{-1}uv^{u\delta}$, for $u,v \in F_e$.
Then $P_e$ is normal in $F_e$, invariant under the $G$--action, and contained in the kernel of $\delta$.
So $\delta$ induces $\partial : \bigsqcup_{e \in V} F_e/P_e \ra G$ and this is a free crossed module on $\omega$. Uniqueness follows from the usual universal argument.
\end{proof}

We note that there exists a function $\nu : R \ra C$ induced by mapping $s \in R_e$ to $(s,e) \in (R \between G)_e$,
and that $\nu \partial = \omega$.

\subsubsection{Modules and crossed modules}
\label{mod_and_cm}

\begin{defn}
Consider a crossed module $C \xrightarrow{\partial} G \rightrightarrows V$ in which $\partial$ is trivial: that is, $\partial$ maps each $a \in C_e$ to $e \in V$.  We write $\partial = \varep$.  By CM2 each $C_e$ is then abelian, and $C$ is a 
\textit{$G$--module}.  The concept of a \textit{free}
$G$--module then follows: given a set $R$ and a function $\omega : R \ra G$ with $\omega \D = \omega \R$, a $G$--module
$\curlyA$ is \textit{free on} $\omega$, if for any $G$--module $\curlyB$ and function $\nu : R \ra \curlyB$ such that
$\nu \varep = \omega$, there exists a unique morphism $\phi : \curlyA \ra \curlyB$ of $G$--modules.
\end{defn}

More generally, we have:

\begin{prop}
\label{free_cm_and_free_mod}
\leavevmode
\begin{enumerate}
\item Let $\C \xrightarrow{\partial} G \rightrightarrows V$ be a crossed $G$--module, and let $\curlyQ$ be the 
quotient groupoid $G / \C \partial$, with $\pi : G \ra \curlyQ$ the natural map. Then $\C^{ab} = \bigsqcup_{e \in V} C_e^{ab}$ is a $\curlyQ$--module, where for $c \in C_e$ and $q = g \pi$ with $gg^{-1}=e$ we have 
\[ \ol{c} \lhd q = \ol{c^g} \,.\]
\item If $\C \xrightarrow{\partial} G \rightrightarrows V$ is a free crossed module  with basis $\omega : R \ra G$ then
$\C^{ab}$ is a free $\curlyQ$--module with basis the image of the induced map $R \ra \C \ra \ol{\C}$.
\end{enumerate}
\end{prop}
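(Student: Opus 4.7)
My plan is to establish part (1) by verifying that the formula $\ol{c} \lhd (g\pi) := \ol{c^g}$ descends to a well-defined $\curlyQ$-action on $\C^{ab}$ and then checking the module axioms, and to establish part (2) by working directly with the explicit construction of the free crossed module from the proof of Proposition~\ref{free_pcm_gpds} and building the extending morphism on generators.

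For part (1), the substantive step is independence of the choice of lift. If $g, g' \in G$ are two lifts of $q \in \curlyQ$, then $g^{-1}g'$ lies in the local group $G(g\R)$ and is killed by $\pi$; since $\ker \pi = \C\partial$, one has $g' = g (c_0\partial)$ for some $c_0 \in C_{g\R}$. For $c \in C_{g\D}$, associativity of the action combined with (CM2) gives
\[
c^{g'} \;=\; c^{g(c_0\partial)} \;=\; (c^g)^{c_0\partial} \;=\; c_0^{-1}(c^g) c_0,
\]
which agrees with $c^g$ in the abelian quotient $C_{g\R}^{ab}$. Well-definedness in the $\C$-variable is automatic because $c \mapsto c^g$ is a group homomorphism and so passes to abelianisations. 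The module axioms---additivity in $\ol{c}$, the identity $\ol{c} \lhd 1_e = \ol{c}$, and associativity $(\ol{c}\lhd q_1)\lhd q_2 = \ol{c}\lhd (q_1 q_2)$---drop out immediately from the corresponding properties of the ambient $G$-action on $\C$.

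For part (2), I would use the concrete description $C_e = F_e/P_e$, so $C_e^{ab} = F_e^{ab}/\ol{P_e}$ with $F_e^{ab}$ the free abelian group on $(R \between G)_e$. Given any $\curlyQ$-module $\B$ and a function $\tau : R \to \B$ with $\tau(s) \in B_{(s\omega)\D}$, define $\phi$ on generators of $F_e^{ab}$ by $\phi(\ol{(s,g)}) := \tau(s) \lhd (g\pi)$ and extend $\Z$-linearly. The central verification is that $\phi$ annihilates the image of each relator: $\ol{\langle u, v\rangle}$ abelianises to $\ol{v^{u\delta}} - \ol{v}$, and because $u\delta \in \delta(F_e) = (\C\partial)_e$, one has $(u\delta)\pi = 1_e$ in $\curlyQ$, so the action by $(u\delta)\pi$ is trivial and $\phi(\ol{v^{u\delta}}) = \phi(\ol{v})$. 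Thus $\phi$ descends to a well-defined map $\C^{ab} \to \B$; it is a $\curlyQ$-module homomorphism by construction and extends $\tau$ along $R \xrightarrow{\nu} \C \to \C^{ab}$. Uniqueness is then a consequence of the fact that each generator $(s, g) \in (R \between G)_e$ equals $(s, 1_{e'})^g$ with $s \in R_{e'}$, so that the image of $\nu$ generates $\C^{ab}$ as a $\curlyQ$-module.

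The common conceptual core---and the one point I would write out most carefully---is that $\C\partial$ acts trivially on $\C^{ab}$, an immediate consequence of (CM2) after abelianisation. This single fact is simultaneously what makes the $\curlyQ$-action in part (1) well-defined and what causes the relators of the free crossed module to die under $\phi$ in part (2), converting the free crossed module universal property into the free $\curlyQ$-module universal property upon passing to $\C^{ab}$.
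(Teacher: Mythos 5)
Your proof is correct, and part (1) follows the paper's argument essentially verbatim (the paper writes the lift as $h=(a\partial)g$ and applies CM2 on the left where you write $g'=g(c_0\partial)$ and apply it on the right, but the content is identical). For part (2), however, you take a genuinely different route. The paper never opens up the construction of $\C$: given a $\curlyQ$--module $\A$, it builds an auxiliary crossed $G$--module $\Lambda \xrightarrow{p_1} G$ with $\Lambda_e = C_e\partial \times A_e$ ($G$ acting by conjugation on the first factor and through $\pi$ on the second), verifies CM1 and CM2 for $\Lambda$, invokes the universal property of the free crossed module $\C$ to get $\lambda:\C\to\Lambda$, and then composes with the second projection to obtain a map $\C\to\A$ that factors through $\C^{ab}$. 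You instead use the explicit model $C_e=F_e/P_e$ from Proposition~\ref{free_pcm_gpds}, define the comparison map on the free abelian group on $(R\between G)_e$, and observe that the abelianised Peiffer relator $\ol{v^{u\delta}}-\ol{v}$ dies because $u\delta\in\C\partial=\ker\pi$ acts trivially downstairs. Your version buys transparency --- it isolates the single mechanism (triviality of the $\C\partial$--action after abelianisation) driving both parts, and it gives uniqueness cleanly since the generators $(s,g)=(s,1_{e'})^g$ exhibit $\im\nu$ as a $\curlyQ$--module generating set --- at the cost of depending on one particular presentation of the free crossed module and of requiring the (small but real) check that the map kills $\ol{\langle u,v\rangle}$ for \emph{arbitrary} $u,v\in F_e$, not just generators, which your argument does cover since the action by $(u\delta)\pi$ is trivial on every word. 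The paper's version is presentation-independent and transports directly to any other model of the free crossed module, but hides the mechanism inside the verification that $\Lambda$ satisfies CM2. Both are complete proofs.
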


\begin{proof}
The claimed $\curlyQ$--action is well-defined, since if $q=g \pi = h \pi$ with $g,h \in G$, then $h = (a \partial)g$
for some $a \in \C$, and then by CM2,
\[ \ol{c^h} = \ol{c^{(a \partial)g}} = \ol{(a^{-1}ca)^g} = \ol{c^g} \,.\]
  
Now let $\mathcal{A}$ be an arbitrary $\curlyQ$-module, and consider the disjoint union of groups $\Lambda = \sqcup_{e \in V} \Lambda_e$, where  $\Lambda_e = C_e \partial \times A_e$.  We let $G$ act on $\Lambda$ by conjugation on each $C_e$ and via $\pi$ on $A_e$.
Let $p_1 : \Lambda \ra G$ be the projection map: we claim that 
$\Lambda \xrightarrow{p_1} G \rightrightarrows V$ is a crossed $G$--module.  For $(c \partial,a) \in \Lambda_e$
and $g \in G$ with $g \D =e$ we have:
\[
((c\partial, a )^g)p_1 = (g^{-1}(c\partial)g, a^{g\pi})p_1 =g^{-1}(c\partial)g=g^{-1}(c\partial, a)p_1g \,,\]
and for $(c_1\partial, a_1), (c_2 \partial, a_2) \in \Lambda_e$,
\begin{align*}
(c_1\partial, a_1)^{(c_2\partial, a_2)p_1} &= (c_1\partial, a_1)^{c_2\partial}\\
&= ( (c_2\partial)^{-1}(c_1\partial)(c_2\partial), a_1^{(c_2\partial) \pi} )\\
&= \left( (c_2\partial)^{-1}(c_1\partial)(c_2\partial), a_1 \right)
\intertext{since $c_2 \partial \pi = e$, and}
&= (c_2 \partial,a_2)^{-1}(c_1 \partial, a_1)(c_2 \partial, a_2) \,,
\end{align*}
since $A_e$ is abelian.  So $\Lambda \xrightarrow{p_1} G \rightrightarrows V$ is a crossed $G$--module.

Now given $\nu' :R \rightarrow \mathcal{A}$, we define 
$$\nu'' = (\nu\partial, \nu'): \mathcal{R} \rightarrow \Lambda \,.$$
We note that $\nu'' p_1 = \nu \partial$, and so by freeness of $\C$, there is an induced morphism $\lambda: \C \ra \Lambda$
of crossed $G$--modules, with $\nu \lambda = \nu''$.  Composing $\lambda$ with the second projection 
$p_2 : \Lambda \ra \A$ gives a morphism $\C \ra \A$ that factors through $\C^{ab} \ra \A$, and is easily seen to be a map
of $\curlyQ$--modules.

The maps used in the proof are illustrated below.

\[
\begin{tikzcd}
& \C \ar[r,"\partial"] \ar[dd,"\lambda",dashrightarrow] & G  \ar[dd, equal]\\
R \ar[ur, "\nu"] \ar[dr,"\nu''"] \ar[ddr,"\nu'"', bend right=20] \ar[urr, "\omega", bend left=70]\\
& \Lambda \ar[r,"p_1"] \ar[d,"p_2"] & G \\
& \curlyA
\end{tikzcd}
\]
\end{proof}

\section{Semiregular and pseudoregular groupoids}
\label{semipseudo}
We now introduce some additional structure on a groupoid, originating in work of Brown and Gilbert \cite{BrGi},
and further developed by Gilbert in \cite{Gi1} and by Brown in \cite{RBr}.  

\begin{defn} \label{reg_cond}
Let $G$ be a groupoid, with object set $V(G)$ and domain and range maps $\D, \R: G \rightarrow V(G)$. 
Then $G$ is \textit{semiregular} if
\begin{itemize}
\item $V(G)$ is a monoid, with identity $e \in V(G)$,
\item there are left and right actions of $V(G)$ on $G$, denoted $x \rhd \alpha$, $\alpha \lhd x$, which for all $x, y \in V(G)$ and $\alpha, \beta \in G$ satisfy:
\begin{enumerate}[(a)]
\item $(xy) \rhd \alpha = x \rhd (y \rhd \alpha)$; $\alpha \lhd (xy) = (\alpha \lhd x) \lhd y$; $(x\rhd \alpha) \lhd y = x \rhd (\alpha \lhd y)$,
\item $e \rhd \alpha = \alpha = \alpha \lhd e$,
\item $(x \rhd \alpha) \D = x(\alpha \D)$; $(\alpha \lhd x)\D = (\alpha \D)x$; $(x \rhd \alpha) \R = x(\alpha \R)$; $(\alpha \lhd x)\R = (\alpha \R)x$,
\item $x \rhd (\alpha \circ \beta) = (x \rhd \alpha) \circ (x \rhd \beta)$; $(\alpha \circ \beta) \lhd x = (\alpha \lhd x) \circ (\beta \lhd x)$, whenever $\alpha \circ \beta$ is defined,
\item $x \rhd 1_y = 1_{xy} = 1_x \lhd y$.
\end{enumerate}
\end{itemize}
\end{defn}

From \cite[section 1]{Gi1} we have the following facts.

\begin{prop}
 \label{star op} 
\leavevmode
\begin{enumerate}[(a)]
\item Let $G$ be a semiregular groupoid.  Then there are two everywhere defined binary operations on $G$ given by:
\begin{align*} \alpha \ast \beta &= (\alpha \lhd \beta \D) \circ (\alpha \R \rhd \beta) \\
\alpha \circledast \beta &= (\alpha \D \rhd \beta) \circ (\alpha \lhd \beta \R) \,.
\end{align*}
Each of the binary operations $\ast$ and $\circledast$ make $G$ into a monoid, with identity $1_e$.
\item The binary operation $\ast$ and the monoid structure on $V(G)$ make the semiregular groupoid $G$ into a strict monoidal groupoid if and only if the operations $\ast$ and $\circledast$ on $G$ coincide.
\end{enumerate}
\end{prop}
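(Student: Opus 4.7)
For part (a), my plan is to verify in turn that (i) the formula for $\alpha \ast \beta$ always gives an element of $G$, (ii) $1_e$ is a two-sided identity, and (iii) $\ast$ is associative, with a dual argument handling $\circledast$. For (i), axiom (c) shows that the range of $\alpha \lhd \beta \D$ is $(\alpha \R)(\beta \D)$, matching the domain of $\alpha \R \rhd \beta$, so the $\circ$-composition is legal. For (ii), a short calculation with (b) and (e) gives $\alpha \ast 1_e = (\alpha \lhd e) \circ (\alpha \R \rhd 1_e) = \alpha \circ 1_{\alpha \R} = \alpha$, and symmetrically on the left.

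For (iii), my approach is to expand both $(\alpha \ast \beta) \ast \gamma$ and $\alpha \ast (\beta \ast \gamma)$ by repeated use of the distributivity axiom (d) and the iterated-action axiom (a), and to show that both reduce to the common triple composite
\[ (\alpha \lhd (\beta \D)(\gamma \D)) \circ (\alpha \R \rhd (\beta \lhd \gamma \D)) \circ ((\alpha \R)(\beta \R) \rhd \gamma). \]
The corresponding statement for $\circledast$ is dual, interchanging the roles of $\D$ and $\R$ and of left and right actions.

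For part (b), the work of (a) already supplies strict associativity and strict units for $\ast$, and (e) gives $1_x \ast 1_y = 1_{xy}$, so $\ast$ preserves identities on objects. The only remaining condition for $(G, \ast, 1_e)$ to be strict monoidal over the monoid $V(G)$ is the interchange (bifunctoriality) law
\[ (\alpha \circ \alpha') \ast (\beta \circ \beta') = (\alpha \ast \beta) \circ (\alpha' \ast \beta') \]
for every compatible quadruple. I will expand both sides using (d), cancel the common outer factors $\alpha \lhd \beta \D$ and $\alpha' \R \rhd \beta'$, and substitute $\alpha \R = \alpha' \D$ and $\beta' \D = \beta \R$ to reduce the identity to
\[ (\alpha' \lhd \beta \D) \circ (\alpha' \R \rhd \beta) = (\alpha' \D \rhd \beta) \circ (\alpha' \lhd \beta \R), \]
which is exactly the equation $\alpha' \ast \beta = \alpha' \circledast \beta$. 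Specialising $\alpha$ and $\beta'$ to identity morphisms shows that universal interchange forces this equation to hold for arbitrary $\alpha', \beta$, while conversely if $\ast = \circledast$ on all of $G$ the reversed computation recovers interchange for any compatible quadruple.

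I expect the bookkeeping in the associativity step of (a) to be the main obstacle: each application of (a), (c), (d), (e) reshapes the nesting of $\lhd$ and $\rhd$ actions, and one must carefully check that domains and ranges line up so that every $\circ$-composition remains defined throughout the rewriting. Part (b) is then a comparatively short manipulation once the two halves of the reduced interchange equation are recognised as the defining formulas for $\ast$ and $\circledast$.
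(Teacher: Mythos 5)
Your proof is correct: the well-definedness check via axiom (c), the unit computation via (b) and (e), the reduction of both associations to the common triple composite, and the reduction of the interchange law to $\alpha' \ast \beta = \alpha' \circledast \beta$ (by cancelling the outer factors $\alpha \lhd \beta\D$ and $\alpha'\R \rhd \beta'$ in the groupoid and specialising $\alpha$, $\beta'$ to identities for the converse) all go through. The paper states this proposition without proof, citing \cite[section 1]{Gi1}, so there is no in-paper argument to compare against; yours is the standard one and is essentially what that reference does.
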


\begin{defn}
In view of part (c) of Proposition \ref{star op}, we say that a semiregular groupoid is  \textit{monoidal} if
the operations $*$ and $\circledast$ coincide.  (Brown \cite{RBr} calls such semiregular groupoids \textit{commutative
whiskered groupoids}.)
\end{defn}

\subsection{Pseudoregular groupoids}
\label{pseudoregular}
In considering presentations of inverse monoids, we shall want to consider semiregular groupoids in
which the vertex set is an inverse monoid. 

\begin{defn}
A semiregular groupoid $G$ is a \textit{pseudoregular} groupoid if $V(G)$ is an inverse monoid.
\end{defn}

The name \textit{pseudoregular} is chosen to reflect the close structural connection between inverse monoids and pseudogroups,
which are inverse semigroups of partial homeomorphisms of topological spaces (see \cite[section 1.1]{LwBook}).

In a pseudoregular groupoid $G$, the operations, $\ast$ and $\circledast$ given in proposition \ref{star op}(a) each make $G$ into a monoid, but not necessarily an inverse monoid, as we show in the next example.

\begin{ex}
Let $\partial : T \ra G$ be a crossed
module of groups.  Add a zero $0$ to $G$ to form the inverse semigroup $G^0$ and let $0 \in G^0$ act on $T$ as the trivial endomorphism $t \mapsto 1$.  The product $G^0 \times T$
is then a pseudoregular groupoid, with the following structure.
The subset $G \times T$ is a semiregular groupoid (see \cite[Proposition 1.3(ii)]{Gi1}) with vertex set $G$,  with $(g,t)\D=g$ and $(g,t)\R = g(t \partial)$, and with composition $(g,t)(h,u) = (g,tu)$ defined when $h = h(t \partial)$.  For the additional arrows in 
$\{ 0 \} \times T$ we define $(0,t)\D=0=(0,t)\R$ and composition $(0,t) \circ (0,u) =(0,tu)$ and so the local group at $0$ is a copy of
$T$.  The left and right actions of $0 \in G^0$ are given by:
\begin{align*}
0 \rhd (g,t) &= (0,1)(g,t) = (0,t), \\
(g,t) \lhd 0 &= (g,t)(0,1) = (0,1), \\
0 \rhd (0,t) &= (0,1)(0,t) = (0,t), \\
(0,t) \lhd 0 &= (0,t)(0,1) = (0,1).
\end{align*}
Then $\partial : T \ra G^0$ is a \textit{crossed monoid} (originally \textit{mono\"{i}de crois\'e}) in the sense of
Lavendhomme and Roisin \cite[Example 1.3C]{LR}, and $G^0 \times T$ is a pseudoregular
groupoid $G$, with vertex set $G^0$.  The $\ast$--operation on $G$ recovers the semidirect product $G^0 \ltimes T$:
\[ (g,t) \ast (h,u) =  (gh,t^hu)  \]
and the operations $\ast$ and $\circledast$ coincide, but $G^0 \ltimes T$ is not inverse.  This follows from the results of \cite{Nico}, but can also be seen directly, as follows.

For any $t \in T$, the element $(0,t)$ is an idempotent in $(G^0 \ltimes T, \ast)$:
\[ (0,t) \ast (0,t) = (0,t^0t) = (0,1t) = (0,t) \,. \]
But for distinct $s,t \in T$ we have
\[ (0,s) \ast (0,t) = (0,s^0t) = (0,t) \; \text{and} \; (0,t)(0,s) = (0,s) \]
and so the idempotents in $G^0 \ltimes T$ do not commute.  Since $G \ltimes T$ is a subgroup of $G^0 \ltimes T$ and the other
elements are idempotents, $G^0 \ltimes T$ is regular (and indeed \textit{orthodox}, since $E(G^0 \ltimes T)$ is a subsemigroup).
\end{ex}

In a pseudoregular groupoid $G$, it is natural to consider $\st_e(G)$
for each idempotent $e \in V(G)$. The operation $*$ then makes $\st_e(G)$ into a semigroup.  However,
as the following example shows, the identity arrow $1_e$ at $e$ is not necessarly an identity element for $(\st_e(G),\ast)$.

\begin{ex}
\label{not_a_monoid}
Let $E$ be the semilattice $\{ 1,e,f,0 \}$ with $ef=0$ and consider the simplicial groupoid $E \times E$
with vertex set $E$, and $\D$ and $\R$ given by the projection maps.
Let $U$ be the subgroupoid of $E \times E$ defined by 
\[ U = \{ (x,y) \in E \times E : x \ne 1 \ne y \} \cup \{ (1,1) \} \,.\]
Right and left actions of $E$ on $E \times E$ are defined by multiplication:
\[ x \rhd (y,z) = (xy,xz) \quad \text{and} \quad (x,y) \lhd z = (xz,yz) \,,\]
making $U$ pseudoregular.  The $\ast$--operation is given by
\[ (u,v) \ast (x,y) = ((u,v) \lhd x)(v \rhd (x,y)) = (ux,vx)(vx,vy) = (ux,vy) \,.\]
 The star at $0$ is
$\st_0 = \{ (0,e),(0,f),(0,0) \}$,
but the identity arrow $1_0 = (0,0)$ is not an identity element in $(\st_0, \ast)$.
\end{ex}

We can, however, remedy the problem illustrated in Example \ref{not_a_monoid} by passing to a subsemigroup that does 
admit $1_e$ as an identity.  For an idempotent $e \in V(G)$ we define
\[ \st_e^{\x}(G) = \{ e \rhd \alpha \lhd e : \alpha \in \st_e(G) \} \,.\]
It is clear that the operation $*$ now makes $\st_e^{\x} (G)$ into a monoid
with identity $1_e$.  The range map $\R : G \ra V(G)$ restricts to a semigroup morphism $\R_e : \st_e^{\x} (G) \ra V(G)$
whose image is a monoid $K_e$ with identity $e$.
For $\alpha \in \st_e(G)$ we set $\alpha^{\x} = e \rhd \alpha \lhd e$ and define 
\[ \pi_e^{\x}(G) = \{\alpha^{\x} \in \st_e^{\x}(G) : (\alpha^{\x})\R=e \} \,.\] 

\begin{prop} \label{piex}
In a pseudoregular groupoid $G$,
the binary operation $\ast$ and the groupoid composition $\circ$ coincide on $\pi_e^{\x}(G)$ and under each operation $\pi_e^{\x}(G)$ is a group.  Furthermore if $G$ is monoidal, then $\pi_e^{\x}(G)$ is abelian, and the family of abelian groups 
$\pi^{\x}(G) = \{ \pi_e^{\x}(G), \; e \in E(V(G)) \}$, is a $V(G)$--module.
\end{prop}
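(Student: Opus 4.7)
The plan is to unwind the definition of $\pi_e^{\x}(G)$: an element is an arrow $\alpha$ in the local group $G(e)$ which is fixed by the endomorphism $\alpha \mapsto e \rhd \alpha \lhd e$. This fixed-point description is the lever for most of the computations, together with the identities $\alpha \lhd e = \alpha$ and $e \rhd \alpha = \alpha$ for $\alpha \in \pi_e^{\x}(G)$, which follow from parts (a) and (b) of Definition \ref{reg_cond}. Given this, the first claim---that $\ast$ and $\circ$ agree on $\pi_e^{\x}(G)$---follows from direct substitution into the formula of Proposition \ref{star op}: $\alpha \ast \beta = (\alpha \lhd \beta \D) \circ (\alpha \R \rhd \beta) = (\alpha \lhd e) \circ (e \rhd \beta) = \alpha \circ \beta$. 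I would then check that $\pi_e^{\x}(G)$ is a subgroup of $G(e)$ under $\circ$: the identity $1_e$ lies in $\pi_e^{\x}(G)$ by axiom (e), closure under $\circ$ follows by pushing $e \rhd (\alpha \circ \beta) \lhd e$ inside via axiom (d), and closure under groupoid inversion reduces to $(x \rhd \alpha)^{-1} = x \rhd \alpha^{-1}$ and $(\alpha \lhd x)^{-1} = \alpha^{-1} \lhd x$, which are immediate from axioms (d) and (e). For abelianness in the monoidal case, the symmetric computation with $\circledast$ yields $\alpha \circledast \beta = \beta \circ \alpha$, and the hypothesis $\ast = \circledast$ forces commutativity of $\pi_e^{\x}(G)$.

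For the $V(G)$-module structure on $\pi^{\x}(G) = \bigsqcup_{e \in E(V(G))} \pi_e^{\x}(G)$, I would first equip the family with a Clifford semigroup structure by defining, for $e \geq f$, the connecting map $\phi^e_f : \pi_e^{\x}(G) \to \pi_f^{\x}(G)$ by $\alpha \mapsto f \rhd \alpha \lhd f$. Functoriality of this presheaf of abelian groups reduces to axiom (a) together with $fe = ef = f$. I would then define the action by $\alpha \cdot s = s^{-1} \rhd \alpha \lhd s$ for $\alpha \in \pi_e^{\x}(G)$ and $s \in V(G)$, with target fibre $\pi_{s^{-1}es}^{\x}(G)$. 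Verifying that the result really lies in this fibre uses the inverse semigroup identities $ss^{-1}s = s$, $s^{-1}ss^{-1} = s^{-1}$, and commutativity of the semilattice $E(V(G))$. Axioms (ii)--(iv) of Definition \ref{Lau_mod} then follow by direct manipulation using axioms (a) and (e): associativity (ii) from $t^{-1}s^{-1} = (st)^{-1}$, the restriction identity (iii) by rewriting $a$ as $f \rhd a \lhd f$ and using $ef = fe$, and vanishing (iv) from axiom (e).

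The delicate step will be axiom (i), compatibility of the action with the Clifford sum $\oplus$. The crux is the identity $(s^{-1}es)(s^{-1}fs) = s^{-1}(ef)s$, which follows because $ss^{-1}$ is idempotent and commutes with $e$ and $f$; together with the simplification $s^{-1}efss^{-1} = s^{-1}ef$ from $s^{-1}(ss^{-1}) = s^{-1}$, this lets one see that both $(a \oplus b) \cdot s$ and $(a \cdot s) \oplus (b \cdot s)$ collapse to the common expression $(s^{-1}ef \rhd a \lhd efs) \circ (s^{-1}ef \rhd b \lhd efs)$. Once this bookkeeping with idempotents is cleanly carried through, the module structure on $\pi^{\x}(G)$ is established.
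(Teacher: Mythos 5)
Your proposal is correct and follows essentially the same route as the paper's proof: the same direct substitution showing $\ast$ and $\circ$ coincide on $\pi_e^{\x}(G)$, the same symmetric computation with $\circledast$ for commutativity, and the same presheaf maps $\varphi^e_f(\alpha) = f \rhd \alpha \lhd f$ and action $\alpha \lhd s = s^{-1} \rhd \alpha \lhd s$. The only difference is that you spell out the verification of the Lausch module axioms (in particular axiom (i) via the idempotent identity $(s^{-1}es)(s^{-1}fs) = s^{-1}(ef)s$), which the paper leaves as ``easy to check.''
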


\begin{proof}
For $\alpha^{\x}, \beta^{\x} \in \pi_e^{\x}(G)$ we have
\[ \alpha^{\x} \ast \beta^{\x} = (\alpha^{\x} \lhd (\beta^{\x})\D) \circ ((\alpha^{\x})\R \rhd \beta^{\x})\\
= (\alpha^{\x}\lhd e) \circ (e \rhd \beta^{\x})\\
=\alpha^{\x} \circ \beta^{\x} \,. \]
Since $e \rhd \alpha^{-1} \lhd e =  (e \rhd \alpha \lhd e)^{-1}$ it is clear that $\pi_e^{\x}(G)$ is a subgroup of the local
group $\pi_1(G,e)$ at $e$ in the groupoid $G$.

If $G$ is monoidal, then $\ast$ and $\circledast$ coincide,and
\[
\alpha^{\x} \circ \beta^{\x} = \alpha^{\x} * \beta^{\x}\\
= \alpha^{\x} \circledast \beta^{\x}\\
=(e \rhd \beta^{\x}) \circ  (\alpha^{\x} \lhd e)\\
= \beta^{\x} \circ \alpha^{\x} \,.
\]
So $\pi_e^{\x}$ is abelian.
Now for $e \geq f$ we define $\varphi_f^e: \pi_e^{\x}(G) \rightarrow \pi_f^{\x}(G)$ by
$\alpha^{\x} \mapsto f \rhd \alpha^{\x} \lhd f \in \pi_f^{\x}$.
Then for $\alpha^{\x}, \beta^{\x} \in \pi_e^{\x}$:
\begin{align*}
(\alpha^{\x} * \beta^{\x}) \varphi^e_f &= f \rhd (\alpha^{\x} * \beta^{\x}) \lhd f\\
&=f \rhd (\alpha^{\x} \circ \beta^{\x}) \lhd f\\
&=(f \rhd \alpha^{\x} \lhd f) \circ (f \rhd\beta^{\x} \lhd f)\\
&=(f \rhd \alpha^{\x} \lhd f) * (f \rhd\beta^{\x} \lhd f)\\
&=\alpha^{\x} \varphi^e_f * \beta^{\x} \varphi^e_f
\end{align*}
and so each $\varphi_f^e$ is a homomorphism.  Furthermore, if $e \geq f \geq g$ then
$$ \alpha^{\x} \varphi^e_f \varphi^f_g = g \rhd ( f \rhd \alpha^{\x} \lhd f) \lhd g = gf \rhd \alpha^{\x} \lhd fg = g \rhd \alpha^{\x} \lhd g = \alpha^{\x} \varphi_g^e$$
and clearly $\varphi^e_e$ is the identity on $\pi_e^{\x}$.
Therefore $(\pi_e^{\x}, \varphi^e_f)$ is a presheaf of abelian groups and a
$V(G)$--action is now given by $\alpha \lhd s = s^{-1} \rhd \alpha \lhd s \in \pi_{s^{-1}es}^{\x}$. It is easy to check that the conditions in Definition \ref{Lau_mod} for a Lausch $V(G)$--module are satisfied.
\end{proof}

\section{The relation module of an inverse monoid presentation}
\label{rel_mod_inv_mon}
Let $G$ be a group generated by a set $X$, with corresponding presentation map $\theta : F(X) \ra G$.  Let
$N$ be the kernel of $\theta$: then conjugation in $F(X)$ induces a $G$--action on the abelianisation $N^{ab}$ of
$N$, and $N^{ab}$ is the \textit{relation module}.  
As shown in \cite[Corollary 5.1]{BrHu}, the relation module is isomorphic to the first homology group of the Cayley graph $\Cay(G,X)$. 

In \cite{G3} the first author introduced relation modules for inverse monoid presentations by adapting work of Crowell
\cite{Crow} on group presentations.  It was remarked  in \cite{G3} that
\begin{quote}
Defining the relation module in this way permits the introduction of the concept in other algebraic settings where the operation of
abelianisation has no obvious counterpart.
\end{quote}
However, it turns out (as we shall see below) that we can indeed define the relation module of an inverse monoid presentation
as the abelianisation of a certain Clifford semigroup, in a precise analogy of the construction for groups.
We first describe a factorization result for inverse semigroups homomorphisms. Our discussion is based on \cite[page 265]{MaMe}, to which we refer for further details.  The result originates in \cite[Theorem 4.2]{RS}.
\begin{prop} 
\label{factor}
Let $\rho$ be  a congruence on the inverse semigroup $S$.  Then there exists a smallest congruence $\rho_{\min}$
on $S$ whose trace is the same as the trace of $\rho$, defined by
\begin{equation}
\label{defn_min_cong}
 a \, \rho_{\min} \, b \iff \; \text{there exists} \; e \in E(S) \; \text{with} \; ae=be \; \text{and} \; a^{-1}a \relrho e \relrho b^{-1}b \,.
\end{equation}
Furthermore,
\leavevmode
\begin{enumerate}
\item[(a)] For $a, b \in S$ we have 
\begin{equation}
\label{alt_defn_min}
a \relrhomin b \; \iff \; \text{there exists} \; c \in S \; \text{with} \; a \geq c \leq b \; \text{and} \;a \relrho c \relrho b.
\end{equation}
\item[(b)] The canonical map $\psi: S / \rho_{\min} \ra S / \rho$ is idempotent separating,
\item[(c)] If $S$ is $E$--unitary then the canonical map $\tau : S  \ra S /  \rho_{\min}$ is idempotent pure.
\end{enumerate}
\end{prop}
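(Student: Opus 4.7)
The plan is to verify directly that the relation $\rho_{\min}$ defined by (\ref{defn_min_cong}) is a congruence on $S$, show that it is contained in $\rho$ and has the same trace as $\rho$, prove minimality among congruences sharing this trace, and then deduce (a), (b), (c).

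First I would check the equivalence axioms: reflexivity via $e=a^{-1}a$; symmetry is immediate; for transitivity, given $a \relrhomin b$ via $e$ and $b \relrhomin c$ via $f$, the witnesses satisfy $e \relrho b^{-1}b \relrho f$, so $ef \relrho e \relrho a^{-1}a$ and $ef \relrho f \relrho c^{-1}c$, while $a(ef) = (ae)f = (be)f = b(ef) = (bf)e = (cf)e = c(ef)$. For the congruence property, given $a \relrhomin b$ via $e$, I would produce $as \relrhomin bs$ using the witness $s^{-1}es$: commutativity of the idempotents $ss^{-1}$ and $e$ gives $as \cdot s^{-1}es = aes$, and similarly on the $b$-side, so $ae=be$ yields equality of the two products; the new trace conditions $s^{-1}a^{-1}as \relrho s^{-1}es \relrho s^{-1}b^{-1}bs$ follow by applying $\rho$. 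Left compatibility is analogous. Containment $\rho_{\min} \subseteq \rho$ is immediate from $a \relrho ae = be \relrho b$, and the traces agree because for idempotents $e \relrho f$ the element $ef$ witnesses $e \relrhomin f$.

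Minimality is a routine consequence: if $\sigma$ has the same trace as $\rho$ and $a \relrhomin b$ via $e$, then $a^{-1}a \, \sigma \, e \, \sigma \, b^{-1}b$, so $a \, \sigma \, ae = be \, \sigma \, b$. For (a), set $c = ae = be$; then $c \leq a$ and $c \leq b$, and $a \relrho c \relrho b$ comes from $e \relrho a^{-1}a$ and $e \relrho b^{-1}b$. Conversely, given $a \geq c \leq b$ with $a \relrho c \relrho b$, the witness $e = c^{-1}c$ satisfies $ae = c = be$ and $a^{-1}a \relrho c^{-1}c = e \relrho b^{-1}b$. For (b), the idempotents of $S/\rho_{\min}$ are the classes $[e]_{\rho_{\min}}$ with $e \in E(S)$; if $\psi$ identifies $[e]_{\rho_{\min}}$ and $[f]_{\rho_{\min}}$ then $e \relrho f$, hence $e \relrhomin f$ by equality of traces. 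For (c), assume $a \relrhomin e$ with $e \in E(S)$; by (a) there exists $c$ with $a \geq c \leq e$, and $c \leq e$ forces $c \in E(S)$. Since $S$ is $E$-unitary, any element above an idempotent is itself an idempotent, so $a \in E(S)$.

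The main obstacle is the verification that $\rho_{\min}$ is a congruence, particularly compatibility with multiplication: the definition names a specific witness $e$, and for the product one must manufacture a new witness and simultaneously control the equality $asf = bsf$ and the two trace conditions on $s^{-1}a^{-1}as$ and $s^{-1}b^{-1}bs$. The calculation goes through only because idempotents commute in an inverse semigroup, which lets $ss^{-1}$ slide past $e$ in the identity $asf = aes$; once this algebraic lemma is in place, the remaining pieces of the proposition fall out from the definition and the characterization in (a).
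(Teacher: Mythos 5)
Your treatment of parts (a), (b) and (c) is essentially the paper's: (a) uses the same two witness constructions ($c=ae=be$ in one direction, $e=c^{-1}c$ in the other), (b) reduces idempotents of $S/\rho_{\min}$ to classes $[e]_{\rho_{\min}}$ with $e\in E(S)$ --- you use Lallement's Lemma tacitly where the paper invokes it by name --- and then appeals to equality of traces, and (c) produces an idempotent below $a$ and quotes the order-theoretic characterisation of $E$-unitarity, exactly as the paper does. Where you genuinely diverge is in proving from scratch that $\rho_{\min}$ is a congruence with the same trace as $\rho$ and is minimal among such; the paper does not prove any of this, deferring to Margolis--Meakin and Reilly--Scheiblich. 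That extra work is correct except for one under-justified step: ``left compatibility is analogous'' is not quite right, because the condition \eqref{defn_min_cong} is right-handed (it involves $a^{-1}a$ and right multiplication by the witness), so the mirror image of your right-compatibility witness $s^{-1}es$ does not present itself. A witness that does work for $sa \relrhomin sb$ is the idempotent $f=e\,a^{-1}s^{-1}sa$: since $ae=be$ one gets $(sa)f=(sae)(a^{-1}s^{-1}sa)=(sbe)(a^{-1}s^{-1}sa)=(sb)f$, while $(sa)^{-1}(sa)=a^{-1}s^{-1}sa=(a^{-1}s^{-1}sa)(a^{-1}a)\relrho (a^{-1}s^{-1}sa)e=f$, and the $b$-side follows because $\rho_{\min}\subseteq\rho$ gives $a\relrho b$. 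Alternatively, once (a) is established, left compatibility is immediate from the compatibility of the natural partial order with left multiplication applied to $a\geq c\leq b$. Either repair closes the only real gap; everything else checks out.
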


\begin{proof}
(a) We first show that the conditions \eqref{defn_min_cong} and \eqref{alt_defn_min} are equivalent.  
First assume that  \eqref{defn_min_cong} holds and set $c=ae=be$.
Then $a \geq c \leq b$ and, since $a^{-1}a \relrho e \relrho b^{-1}b$ we have
\[ a = aa^{-1}a \relrho ae = be \relrho bb^{-1}b = b \,.\]
Now if \eqref{alt_defn_min} holds, take $e = c^{-1}c$.  Since $a \geq c \leq b$ we have $ae = c = be$, and since $a \relrho c$ we have
$a^{-1}a \relrho c^{-1}c = e$.  Similarly $b^{-1}b \relrho c$.

(b) Suppose that $a,b \in S$ with $a \relrhomin a^2$ and $b \relrhomin b^2$.  By Lallement's Lemma \cite[Lemma 2.4.3]{HoBook}, there
exist $e,f \in S$ with $a \relrhomin e$ and $b \relrhomin f$.  If now $e \relrho f$ then $e \relrhomin f$,
and so $a \relrhomin b$.  Hence $\psi$ is idempotent separating.

(c) Suppose that, for $s \in S$ and $x \in E(S)$,  we have $s \relrhomin x$. Then there exists $e \in E(S)$
with $se = xe$ and $xe \in E(S)$, and if $S$ is $E$--unitary, we have $s \in E(S)$ and so $\tau$ is idempotent pure.
\end{proof}

We now consider an inverse monoid presentation $\P = [X:\curlyR]$ of an inverse monoid $M$.  We set $A = X \sqcup X^{-1}$,
and so $M$ is then a quotient of the free monoid $A^*$, with canonical map $\varphi : A^* \ra M$, and also a quotient of the free inverse monoid $\fim(X)$, with associated presentation map
$\theta : \fim(X) \ra M$. The Wagner congruence on $A^*$ induces the natural map $\rho: A^* \ra \fim(X)$,
and $\varphi = \rho \theta$, and we may factorize $\theta$ as in Proposition \ref{factor}.  We set $\curlyT(M,X) = \fim(X) / \theta_{\min}$
and so have the commutative diagram
\begin{equation} \label{factored}
\xymatrix@=4em{
A^* \ar[r]^{\rho} \ar@/^3.0pc/[rrr]_{\varphi} & \fim(X) \ar[r]^{\tau} \ar@/_2.0pc/[rr]_{\theta} & \curlyT(M,X) \ar[r]^{\psi} & M } \end{equation}
Since $\fim(X)$ is $E$--unitary, the map $\tau$ is idempotent pure and we obtain from \cite{MaMe}  the following  structural information on $\curlyT(M,X)$.

\begin{lemma}{\cite[Lemma 1.6]{MaMe}} 
\label{arboreal} 
Let $\mathcal{P} = [X:\curlyR]$ be a presentation of an inverse monoid $T$. Then the following are equivalent:
\begin{enumerate}[(a)]
\item the presentation map $\theta : \fim(X) \ra T$ is idempotent pure.
\item $\mathcal{P}$ is equivalent to a presentation of the form $\mathcal{P}_1 = [X; \curlyR_1]$  where $\curlyR_1 = \{ e_i = f_i : i \in I \}$ for some set $I$ and idempotents $e_i$, $f_i$ of $\fim(X)$.
\item Each Sch\"utzenberger graph, $\SchL(T,X,e)$ is a tree.
\end{enumerate}
\end{lemma}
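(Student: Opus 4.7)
The plan is to establish the two equivalences $(a) \Leftrightarrow (b)$ and $(a) \Leftrightarrow (c)$. The principal tools are the $E$-unitary property of $\fim(X)$ (with maximum group image $F(X)$), the standard fact that a congruence on an $E$-unitary inverse semigroup is idempotent pure if and only if it is contained in the minimum group congruence $\sigma$, and Stephen's theorem that the Sch\"utzenberger automaton of $t \in T$ recognises exactly the words representing $t$.

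For $(b) \Rightarrow (a)$, every pair of idempotents lies in $\sigma_{\fim(X)}$, so the congruence generated by such pairs is contained in $\sigma_{\fim(X)}$ and is therefore idempotent pure. For $(a) \Rightarrow (b)$, I would take $\curlyR_1$ to be the trace of $\chi = \chi_\theta$ on $E(\fim(X))$ and let $\chi_1$ be the congruence generated by $\curlyR_1$. Plainly $\chi_1 \subseteq \chi$. For the reverse inclusion, given $(a,b) \in \chi$, we have $aa^{-1} \chi bb^{-1}$ immediately, and idempotent purity makes $b^{-1}a$ an idempotent with $b^{-1}a \chi b^{-1}b$; both pairs are trace pairs, hence lie in $\chi_1$. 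Then
\[ a = aa^{-1}a \; \chi_1 \; bb^{-1}a = b(b^{-1}a) \; \chi_1 \; b(b^{-1}b) = b\,, \]
so $(a,b) \in \chi_1$ and $\chi = \chi_1$, giving a presentation whose relations are all between idempotents.

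For $(a) \Rightarrow (c)$, idempotent purity forces $\widehat{T} \cong F(X)$ and makes $T$ itself $E$-unitary. Consequently $\sigma_T$ is injective on each $\curlyL$-class $L_e$: if $s,s' \in L_e$ have equal images in $F(X)$, then $ss'^{-1} \in E(T)$, so $s = (ss'^{-1})s' \leq s'$ and dually $s' \leq s$, giving $s = s'$. Hence $\SchL(T,X,e)$ embeds as a connected subgraph of $\Cay(F(X))$, which is a tree, and connected subgraphs of trees are trees. For $(c) \Rightarrow (a)$, given $a \in \fim(X)$ with $a\theta = e \in E(T)$, write $a = w\rho$ for some $w \in A^*$. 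Stephen's theorem provides a path labelled $w$ from the initial vertex to the terminal vertex of $\A(T,X,e)$; as both equal $e$, this is a closed walk at $e$. Since $\SchL(T,X,e)$ is a tree, the closed walk freely reduces to the empty word, so $w$ represents $1$ in $F(X)$. By $E$-unitarity of $\fim(X)$, $a = w\rho \in E(\fim(X))$, and $\theta$ is idempotent pure.

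The main subtlety lies in $(a) \Rightarrow (c)$, specifically verifying that $T$ inherits $E$-unitarity from $\fim(X)$ under idempotent purity: if $t \sigma_T 1$ in $T$ with $t = a\theta$, then $a$ has trivial image in $\widehat{T} \cong F(X) = \widehat{\fim(X)}$, hence $a \in E(\fim(X))$ and $t \in E(T)$. Once this is secured, the embedding into the Cayley tree and the tree-reducibility arguments run smoothly.
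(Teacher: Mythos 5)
The paper offers no proof of this lemma: it is quoted directly from Margolis and Meakin \cite{MaMe}, so there is no internal argument to compare yours against. Your reconstruction is correct and is essentially the standard one. The cycle (a)$\Rightarrow$(b) via taking $\curlyR_1$ to be the trace of $\chi_\theta$ on $E(\fim(X))$ and recovering $\chi_\theta$ from it by the computation $a = aa^{-1}a \; \chi_1\; bb^{-1}a = b(b^{-1}a)\; \chi_1\; b(b^{-1}b)=b$ is sound, as is the use of the standard fact that on an $E$--unitary inverse semigroup the idempotent pure congruences are exactly those contained in the minimum group congruence. Two places deserve a word more care than you give them. First, in (a)$\Rightarrow$(c) you need the map $\SchL(T,X,e) \ra \Cay(F(X),X)$ to be injective on \emph{edges} as well as on vertices (an edge is determined by its label and source, and $\sigma_T$ is injective on $L_e$, so this holds, and it rules out loops and multiple edges before you invoke ``connected subgraphs of trees are trees''). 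Second, the paper's convention is that $\SchL(M,X,e)$, read as an automaton, accepts the \emph{reverses} of the words $w$ with $w\varphi \geq e$; your appeal to Stephen's theorem in (c)$\Rightarrow$(a) should thread this reversal through, though it changes nothing essential since a word freely reduces to the empty word if and only if its reverse does. With those points made explicit, your argument is a complete and correct proof of the cited result.
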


\begin{defn}
An inverse monoid $T$ is \textit{arboreal} \cite{G3} if it satisfies the conditions of Lemma \ref{arboreal}.
\end{defn}

\begin{cor}
\label{arboreal_is_Eunitary}
An arboreal inverse monoid $T$ is $E$--unitary.
\end{cor}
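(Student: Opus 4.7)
The strategy is to unwind $E$--unitarity via its standard order-theoretic reformulation, and then pull the question back along the presentation map $\theta \colon \fim(X) \to T$, exploiting both hypotheses available: that $\theta$ is idempotent pure (the content of arboreality, via Lemma \ref{arboreal}) and that $\fim(X)$ is itself $E$--unitary (noted in Section \ref{backnot}). The key observation is that $T$ being $E$--unitary is equivalent to the statement ``every idempotent $c \leq a$ in $T$ forces $a \in E(T)$'': indeed, if $a \sigma_T \in E(\widehat{T})$ then there exists $c \in T$ with $c \leq a$ and $c \leq e$ for some $e \in E(T)$, and such $c$ is automatically idempotent because idempotents of an inverse semigroup commute.

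So I would begin with $a \in T$ and an idempotent $c \in E(T)$ satisfying $c \leq a$; by definition of the natural order, $c = fa$ for some $f \in E(T)$. Lift $a$ to a word $w \in \fim(X)$ with $w \theta = a$, using surjectivity of $\theta$, and lift $f$ to a genuine idempotent of $\fim(X)$ as follows: choose any $g$ with $g \theta = f$, and set $u = g^{-1}g \in E(\fim(X))$, so that $u \theta = f^{-1}f = f$. Then $(uw)\theta = f a = c \in E(T)$.

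Now I would invoke the two hypotheses in succession. Since $\theta$ is idempotent pure, $(uw)\theta \in E(T)$ forces $uw \in E(\fim(X))$. But $uw \leq w$ in the natural order on $\fim(X)$ by construction, so an idempotent lies below $w$; since $\fim(X)$ is $E$--unitary this forces $w \in E(\fim(X))$, whence $a = w\theta \in E(T)$, as required. There is no real obstacle here: the argument is a brief diagram chase through $\theta$, and the only thing to be careful about is confirming the order-theoretic reformulation of $E$--unitarity at the start, which is a standard consequence of the definition of $\sigma_T$ together with the fact that any element below an idempotent is itself idempotent.
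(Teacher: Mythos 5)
Your argument is correct, but it is not the route the paper takes. You use part (a) of Lemma \ref{arboreal} — the presentation map $\theta \colon \fim(X) \to T$ is idempotent pure — together with the order-theoretic reformulation of $E$--unitarity ($e \leq a$ with $e$ idempotent forces $a$ idempotent, via the standard description of the minimum group congruence), and you transfer that condition along the idempotent pure surjection $\theta$; in effect you prove the general fact that an idempotent pure surjective image of an $E$--unitary inverse semigroup is again $E$--unitary. The paper instead invokes part (b) of Lemma \ref{arboreal}: since $\mathcal{P}$ is equivalent to a presentation whose relations equate idempotents, $T$ has maximum group image $F(X)$ and the map $\sigma \colon \fim(X) \to F(X)$ factorizes as $\theta\sigma_T$; as $\sigma$ is idempotent pure and $\theta$ is surjective, $\sigma_T$ is idempotent pure. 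The paper's version is shorter and yields the extra information that $\widehat{T} \cong F(X)$, which is worth knowing in its own right; your version is more elementary and more general, needing only the idempotent purity of $\theta$ and no identification of $\widehat{T}$, at the cost of first justifying the order-theoretic characterisation of $E$--unitarity (standard, and covered by the paper's reference to \cite[section 2.4]{LwBook}). All the individual steps you give — the lift $u = g^{-1}g$ of $f$, the inequality $uw \leq w$, and the use of $E$--unitarity of $\fim(X)$ — check out.
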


\begin{proof}
It follows from part (b) of Lemma \ref{arboreal} that $T$ has maximum group image $F(X)$ and that the quotient map
$\sigma : \fim(X) \ra F(X)$ factorizes as $\tau \sigma_T$.  Since $\sigma$ is idempotent pure and $\tau$ is surjective,
the map $\sigma_T$ is idempotent pure.
\end{proof}

The factorization of $\theta$ shown in \eqref{factored} gives us an
idempotent separating homomorphism $\psi :  \curlyT(M,X) \ra M$.
By Proposition \ref{ker_of_idsep},
\[ K = \ker \psi = \{ w \in \curlyT(M,X) : w \psi \in E(M) \}. \]
is a Clifford semigroup, and so  is a union of groups $K_e$, indexed by the idempotents of $M$. Hence $K$ has the natural abelianisation 
\[ \curlyK = \bigcup_{e \in E(M)} K_e^{ab} \] 
that is an $M$--module by Lemma \ref{ker_is_mod}.  

\begin{defn}
The \textit{relation module} of the presentation $\P$ is the $M$--module $\curlyK$.
\end{defn}

We now draw the connection between relation modules and  Sch\"utzenberger graphs.
For the left Sch\"utzenberger graph $\SchL(M,X)$, the cellular chain group
$C_0(\SchL(M,X,e))$ is the free abelian group on the $\curlyL$--class $L_e$ in $M$, and    
$C_1(\SchL(M,X,e))$  is the free abelian group on the set
$$\{ (x,s) : x \in X, s \in M, (x^{-1}x) \theta \geq ss^{-1}, s^{-1}s=e \} \,.$$ 
The boundary map $\partial: C_1(\SchL(M,X,e)) \ra C_0(\SchL(M,X,e))$ maps $(x,s) \mapsto (x \theta)s-s$.
Now
\[ C_0(\SchL(M,X)) = \bigoplus_{e \in E(M)} C_0(\SchL(M,X,e)) \]
and 
\[ C_1(\SchL(M,X)) = \bigoplus_{e \in E(M)} C_1(\SchL(M,X,e)) \,,\]
and by defining 
$$s \lhd t = st \; \text{and} \; (a,s) \lhd t = (a,st)$$
we get an $M$--module structure on each of $C_0(\SchL(M,X))$ and $C_1(\SchL(M,X))$.
The boundary map
$ \partial: C_1(\SchL(M,X)) \ra C_0(\SchL(M,X))$
is then a map of $M$--modules, and its kernel $H_1(\SchL(M,X))$ is an $M$--module $\curlyH$, with the group
$H_1(\SchL(M,X))_e$ being the first homology group $H_1(\SchL(M,X,e))$ of the connected component containing $e \in E(M)$.

\bigskip
For $e \in E(M)$ we define
$U_e = \{ w \in A^* : w \varphi \geq e \}$.
Then $U_e$ is a submonoid of $A^*$ and is the reverse of the language accepted by the Sch\"utzenberger graph $\SchL(M,X,e)$ when regarded as an automaton with input alphabet
$A$ and unique start/accept state $e$, see \cite{Steph}.  (The reversal arises because we assume that as an automaton, 
$\SchL(M,X,e)$ reads input words from the left, but the action on states is by left multiplication in $M$.) Let $\pi_e$ denote the fundamental group $\pi_1(\SchL(M,X,e),e)$.  A closed path $\alpha$ in $\SchL(M,X,e)$ is labelled by a unique word $w \in A^*$
whose reverse $\rev{w}$ is in $U_e$: we write $[w]$ for the homotopy class of $\alpha$ in $\pi_e$.

\begin{lemma}
\label{wordplay}
There is a group isomorphism $\kappa_e : \pi_e \ra K_e$ mapping the homotopy class $[w] \in \pi_e$  to $[(\rev{w} \rho \tau) \tilde{e}]^{-1}$, where $\rev{w}$ is the reverse of $w$ and $\tilde{e}$ is the unique preimage in $E(\curlyT(M,X))$ of $e \in E(M)$.
\end{lemma}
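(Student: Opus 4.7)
The plan is as follows. Since $\fim(X)$ is $E$-unitary, Proposition \ref{factor}(c) gives that $\tau:\fim(X)\to\curlyT(M,X)$ is idempotent pure, so $\curlyT(M,X)$ is arboreal and by Lemma \ref{arboreal}(c) the Sch\"utzenberger graph $\Gamma:=\SchL(\curlyT(M,X),X,\tilde{e})$ is a tree. Because $\psi$ is injective on idempotents, at any $s\in V(\Gamma)=L_{\tilde{e}}$ with image $t=s\psi$ the edge condition $(x^{-1}x)(\rho\tau)\geq ss^{-1}$ is equivalent to $(x^{-1}x)\theta\geq tt^{-1}$. Hence $\psi$ induces a covering of graphs $\pi:\Gamma\to\SchL(M,X,e)$. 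Its fiber over $e$ consists of those $s\in L_{\tilde{e}}$ with $s\psi=e$; for such $s$ the idempotent $ss^{-1}$ has $\psi$-image $e=\tilde{e}\psi$, forcing $ss^{-1}=\tilde{e}$ by idempotent separation, so the fiber is precisely $K_e$.

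A closed path at $e$ labelled $w=a_1\cdots a_n$ lifts uniquely to a path in $\Gamma$ starting at $\tilde{e}$ and terminating at $(a_n\rho\tau)\cdots(a_1\rho\tau)\tilde{e}=(\rev{w}\rho\tau)\tilde{e}\in K_e$. Because $\Gamma$ is simply connected, two closed paths at $e$ in $\SchL(M,X,e)$ are homotopic if and only if their lifts share a terminal vertex: this gives both well-definedness and injectivity of the assignment $[w]\mapsto(\rev{w}\rho\tau)\tilde{e}$. Surjectivity follows from connectedness of $\Gamma$: every $k\in K_e$ is the terminus of some path from $\tilde{e}$, whose projection is a closed path at $e$. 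Composition with inversion in the group $K_e$ then gives the bijection $\kappa_e$.

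For the homomorphism property, each $(\rev{w_i}\rho\tau)\tilde{e}$ lies in the Clifford group $K_e$ whose identity is $\tilde{e}$, so $\tilde{e}\cdot(\rev{w_1}\rho\tau)\tilde{e}=(\rev{w_1}\rho\tau)\tilde{e}$, and therefore
\[
(\rev{w_2}\rho\tau)\tilde{e}\cdot(\rev{w_1}\rho\tau)\tilde{e}=(\rev{w_2}\rho\tau)\bigl[\tilde{e}(\rev{w_1}\rho\tau)\tilde{e}\bigr]=(\rev{w_2}\rho\tau)(\rev{w_1}\rho\tau)\tilde{e}=(\rev{w_1w_2}\rho\tau)\tilde{e}.
\]
Inverting and using $(xy)^{-1}=y^{-1}x^{-1}$ in $K_e$ yields $\kappa_e([w_1w_2])=\kappa_e([w_1])\kappa_e([w_2])$. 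The main obstacle is verifying the covering property of $\pi$, which depends on $\psi$'s injectivity on idempotents both to match edge conditions between $\Gamma$ and $\SchL(M,X,e)$ and to identify the fiber over $e$ with $K_e$.
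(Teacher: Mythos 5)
Your proof is correct, but it takes a genuinely different route from the paper's. The paper argues directly with words: well-definedness is obtained by noting that inserting or deleting a subword $aa^{-1}$ only moves $(\rev{w}\rho\tau)\tilde{e}$ up or down in the natural partial order, which is trivial on the group $K_e$; injectivity uses the $E$--unitarity of $\curlyT(M,X)$ together with the idempotent purity of $\tau$ to force the label of a $\kappa_e$--trivial circuit to reduce freely to the empty word, hence to be null-homotopic; and surjectivity is read off from Stephen's description of $U_e$ as the reversed language of the Sch\"utzenberger automaton. You instead package all three steps into one covering-space statement: $\SchL(\curlyT(M,X),X,\tilde{e})$ is a tree because $\tau$ is idempotent pure (Lemma \ref{arboreal}), the idempotent-separating property of $\psi$ makes the induced map of Sch\"utzenberger graphs a covering with fibre exactly $K_e$ over $e$, and the endpoint-of-lift map $\pi_e \ra K_e$ is then the standard universal-cover bijection; the homomorphism computation is essentially identical in both treatments. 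Your version is more conceptual --- it exhibits $\SchL(\curlyT(M,X),X,\tilde{e})$ as the universal cover of $\SchL(M,X,e)$ and explains \emph{why} the fibre is $K_e$ --- at the cost of having to verify the covering property with care: the backward transfer of the edge condition $(x^{-1}x)\rho\tau\geq ss^{-1}$ from $M$ up to $\curlyT(M,X)$, and the corresponding condition for incoming edges, each rest on idempotent separation, as you correctly flag. The paper's argument is more elementary and self-contained, relying only on the order-theoretic and purity facts already established in Proposition \ref{factor}.
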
 

\begin{proof}
If $[w] \in \pi_e$ then $\rev{w} \in U_e$ and
\[ \psi : (\rev{w} \rho \tau) \tilde{e} \mapsto (\rev{w} \rho \tau \psi)(\tilde{e} \psi) = (\rev{w} \varphi)e = e \,. \]
Hence $[(\rev{w} \rho \tau) \tilde{e}]^{-1} \in K_e$.
To verify that $\kappa_e$ is well-defined on $\pi_e$ , suppose that $[u]=[v]$.  Then $v$ can be obtained from $u$
by the insertion and deletion of subwords $aa^{-1}$ with $a \in A$.  Considering one such step, if for some $p,q \in A^*$
we have $u=pq$ and $v=paa^{-1}q$ then
\[ [(\rev{u} \rho \tau)  \tilde{e}]^{-1} \geq [(\rev{v} \rho \tau)  \tilde{e}]^{-1} \]
in the subgroup $K_e$ of $\curlyT(M,X)$: since the relation $\geq$
is trivial on $K_e$, we deduce that $[(\rev{u} \rho \tau) \tilde{e}]^{-1} = [(\rev{v} \rho \tau) \tilde{e}]^{-1}$, and so $\kappa_e$ is well-defined. 

Now for $u,v \in U_e$ we have
\begin{align*}  [u]\cdot[v] = [uv] \mapsto [(\rev{(uv)} \rho \tau) \tilde{e}]^{-1}  &= \tilde{e}(\rev{u} \rho \tau)^{-1} (\rev{v} \rho \tau)^{-1} 
\\ & = \tilde{e}(\rev{u} \rho \tau)^{-1} \tilde{e} (\rev{v} \rho \tau)^{-1} = ([u] \kappa_e)([v] \kappa_e)\,,\end{align*}
since $\tilde{e}$ is the identity of the group $K_e$ and $\tilde{e}(\rev{u} \rho \tau)^{-1}  \in K_e$.  
Hence $\kappa_e$ is a homomorphism.

If $k \in K_e$ we set $w_k$ to be any word in $A^*$ with $\rev{w_k} \rho \tau = k$.  Then 
$e = \rev{w_k} \rho \tau \psi = \rev{w_k} \varphi$ and so $\rev{w_k} \in U_e$ and $[w_k] \in \pi_e$.
Therefore $\kappa_e$ is surjective.

Now suppose that $[w] \in \ker \kappa_e$.  Then $(\rev{w} \rho \tau) \tilde{e} = \tilde{e} \in \curlyT(M,X)$, and since
$\curlyT(M,X)$ is $E$--unitary, we deduce that $\rev{w} \rho \tau \in E(\curlyT)$.  Since $\tau$ is idempotent pure,
$\rev{w} \rho \in E(\fim(X))$ and $\rev{w}$ is freely reducible to the empty word: hence the circuit at $e$ in
$\SchL(M,X,e)$ labelled by $w$ is homotopic to the constant path at $e$, and $[w]$ is trivial.  Therefore
$\kappa_e$ is injective.
\end{proof}

\begin{thm}
The $M$--modules
\[ \curlyK = \bigsqcup_{e \in E(M)} K_e^{ab} \quad \text{and} \quad \curlyH =  \bigsqcup_{e \in E(M)} H_1(\SchL(M,X,e)) \]
are isomorphic.
\end{thm}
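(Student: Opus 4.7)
The plan is to construct, for each idempotent $e \in E(M)$, a fibrewise isomorphism from $H_1(\SchL(M,X,e))$ to $K_e^{ab}$, and then verify that the resulting bijection $\curlyH \to \curlyK$ intertwines the $M$-actions.

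For the fibrewise part, I observe that $\SchL(M,X,e)$ is a connected 1-dimensional CW-complex (a graph), so its first homology is canonically the abelianisation of its fundamental group: $H_1(\SchL(M,X,e)) \cong \pi_e^{ab}$. The isomorphism $\kappa_e : \pi_e \to K_e$ of Lemma~\ref{wordplay} descends to $\pi_e^{ab} \to K_e^{ab}$. Composing yields isomorphisms $\Phi_e : H_1(\SchL(M,X,e)) \to K_e^{ab}$, and the disjoint union $\Phi = \bigsqcup_e \Phi_e$ is the candidate map $\curlyH \to \curlyK$.

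To check $M$-equivariance, let $[w] \in \pi_e$ correspond to a loop at $e$ labelled by $w \in A^*$, and let $t \in M$ with a chosen preimage $\tilde t \in \curlyT(M,X)$. On the homology side the action $\lhd t$ sends the underlying cycle $\sum n_i(a_i, s_i)$ to $\sum n_i(a_i, s_i t)$, a cycle in $\SchL(M,X,t^{-1}et)$ representing the ``same word'' loop translated by $t$; as a homology class, this is independent of its new basepoint $et$. On the module side the action is $\kappa_e([w]) \lhd t = \tilde t^{-1}[(\rev{w}\rho\tau)\tilde e]^{-1}\tilde t$, well-defined in $K^{ab}_{t^{-1}et}$ by Lemma~\ref{ker_is_mod}. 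Since $\psi$ is idempotent separating, the identity idempotent of $K_{t^{-1}et}$ equals $\tilde t^{-1}\tilde e\tilde t$, and so by Lemma~\ref{wordplay} applied at $t^{-1}et$, the translated loop lifts to $[(\rev{w}\rho\tau)\tilde t^{-1}\tilde e\tilde t]^{-1}$. A direct manipulation of inverses in the Clifford semigroup $\ker\psi$ then shows that this expression and $\tilde t^{-1}[(\rev{w}\rho\tau)\tilde e]^{-1}\tilde t$ differ by a commutator in $K_{t^{-1}et}$, hence coincide in $K_{t^{-1}et}^{ab}$.

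The main obstacle is precisely this equivariance verification: although $\kappa_e$ is defined in terms of loops based at the idempotent $e$, the graph-theoretic action $\lhd t$ shifts basepoints from $e$ to $et$, and this must be reconciled with plain conjugation $k \mapsto \tilde t^{-1} k \tilde t$ in $\curlyT(M,X)$. The reconciliation uses three ingredients already in place: the idempotent-separating property of $\psi$ (which forces $\widetilde{t^{-1}et} = \tilde t^{-1}\tilde e\tilde t$), the Clifford-semigroup product on $\ker\psi$ (Proposition~\ref{ker_of_idsep}), and the abelianisation in each fibre (which absorbs all commutator discrepancies). Once these are combined, $\Phi$ is an isomorphism of $M$-modules.
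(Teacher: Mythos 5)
Your route is the same as the paper's: identify $H_1(\SchL(M,X,e))$ with $\pi_e^{ab}$, descend the isomorphism $\kappa_e$ of Lemma~\ref{wordplay} to abelianisations, and reduce everything to checking that the family $\{\bar{\kappa}_e\}$ intertwines the two $M$--actions, using the idempotent-separating property of $\psi$ to identify $\widetilde{t^{-1}et}$ with $\tilde{t}^{-1}\tilde{e}\tilde{t}$. So there is no divergence of method; the issue is that the equivariance verification, which you correctly identify as the heart of the proof, is left resting on a formula that is wrong. The translated cycle is a loop based at $et$, while $\kappa_{t^{-1}et}$ is only defined on loops based at the idempotent $t^{-1}et$ itself; your proposed lift $[(\rev{w}\rho\tau)\tilde{t}^{-1}\tilde{e}\tilde{t}]^{-1}$ conflates these basepoints, and the element $(\rev{w}\rho\tau)\widetilde{t^{-1}et}$ need not lie in $K_{t^{-1}et}$ at all: its image under $\psi$ is $(\rev{w}\varphi)(t^{-1}et)$, and $\rev{w}\varphi \geq e$ does not force $\rev{w}\varphi \geq t^{-1}et$. (In $\I_2$ with $e=\varepsilon$, $\rev{w}\varphi=\varepsilon$ and $t=\tau$ one gets $\varepsilon\cdot\tau\varepsilon\tau=\mathbf{0}\neq\tau\varepsilon\tau$.) Consequently the claim that this expression differs from $\tilde{t}^{-1}[(\rev{w}\rho\tau)\tilde{e}]^{-1}\tilde{t}$ by a commutator of $K_{t^{-1}et}$ cannot be verified as stated, and the abelianisation has nothing to absorb.

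The repair, which is how the paper proceeds, is to first convert the translated cycle into a genuinely based loop: choose a path labelled $u$ from $et$ to $t^{-1}et$ with $t^{-1}=\rev{u}\varphi$, represent the translated class in $\pi_{t^{-1}et}^{ab}$ by $[u^{-1}wu]$, and take the lift $\tilde{t}=\rev{(u^{-1})}\rho\tau$. Then
\[
\kappa_{t^{-1}et}\colon [u^{-1}wu] \longmapsto \bigl(\rev{(u^{-1}wu)}\rho\tau\bigr)\cdot\widetilde{t^{-1}et}
= \tilde{t}^{-1}(\rev{w}\rho\tau)\tilde{t}\,\tilde{t}^{-1}\tilde{e}\tilde{t}
= \tilde{t}^{-1}(\rev{w}\rho\tau)\tilde{e}\tilde{t},
\]
which is exactly the conjugation action on $K$; the two sides agree on the nose, before abelianising, and no commutator correction is needed. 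With this step made precise your argument becomes the paper's proof.
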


\begin{proof}
We identify $H_1(\SchL(M,X,e))$ with $\pi_e^{ab}$ to exploit Lemma \ref{wordplay}: for each $e \in E(M)$ there is a group isomorphism $\bar{\kappa}_e : \pi_e^{ab} \ra K_e^{ab}$.
The action of $t \in M$ on $\curlyH$ is induced by the family of maps  $\pi_e^{ab} \ra \pi_{t^{-1}et}^{ab}$, in which the
image of a closed path $[w]$ in $\pi_e^{ab}$ is mapped to the image of $[u^{-1}wu]$ in $\pi_{t^{-1}et}^{ab}$, where
$t^{-1} = \rev{u}\ph$ (and so $u$ labels a path from $et$ to $t^{-1}et$ in $\SchL(M,X,t^{-1}et)$).
We note that the isomorphism $\kappa_{t^{-1}et}$ maps
\[ [u^{-1}wu] \mapsto \rev{(u^{-1}wu)} \rho \tau \cdot \widetilde{t^{-1}et} \,,\]
where $\widetilde{t^{-1}et}$ is the unique element of $E(\curlyT(M,X))$ with $(\widetilde{t^{-1}et}) \psi = t^{-1}et$.

By Lemma \ref{ker_is_mod}, the $M$--action on $\curlyK$ is induced by conjugation in $\curlyT(M,X)$: for $k \in K_e$
with image $\bar{k} \in K_e^{ab}$,
\[ \bar{k} \lhd t = \overline{\tilde{t}^{-1} k \tilde{t}} \in K_{t^{-1}et}^{ab} \]
for any $\tilde{t}$ with $\tilde{t} \psi = t$.  We set $\tilde{t} = \rev{(u^{-1})} \rho \tau$.  Then for  $[w] \in \pi_e$,
\begin{align*}
\tilde{t}^{-1}([w] \kappa_e)\tilde{t} &= \tilde{t}^{-1} (\rev{w} \rho \tau) \tilde{e} \tilde{t} \\
&= \tilde{t}^{-1} (\rev{w} \rho \tau) \tilde{t} \tilde{t}^{-1} \tilde{e} \tilde{t} \\
&= (\rev{u} \rho \tau) (\rev{w} \rho \tau) (\rev{(u^{-1})} \rho \tau) \tilde{t}^{-1} \tilde{e} \tilde{t} \\
& = (\rev{(u^{-1}wu)} \rho \tau) \cdot \widetilde{t^{-1}et}
\end{align*}
using the fact that $\psi$ is idempotent separating.  Therefore the  diagram
$$\xymatrix@=4em{
\pi_e^{ab}  \ar[r]^{\bar{\kappa}_e} \ar[d] & K_e^{ab} \ar[d]^{\lhd t} \\
\pi_{t^{-1}et}^{ab}  \ar[r]_{\bar{\kappa}_{t^{-1}et}} & K_{t^{-1}et}^{ab} }$$
commutes, and the family of maps $\{\bar{\kappa}_e : E \in E(M) \}$ is an $M$--module isomorphism.

\end{proof}

\subsection{Examples of relation modules}
\label{relmod_egs}
\begin{ex}
Let $M$ be the semilattice $\{ 1,e,f,ef \}$, generated as an inverse monoid by $\{ e,f \}$.  The Sch\"utzenberger graph is
\[
\begin{tikzcd}
& \stackrel{1}{\bullet} & \\
e \bullet \ar[loop,"e"',distance=1.5cm] && \bullet f \ar[loop,"f"',distance=1.3cm]\\
& ef \bullet \ar[loop left,"e",distance=1.5cm]  \ar[loop right,"f",distance=1.5cm] &
\end{tikzcd}
\]
and the relation module is therefore 
\[
\begin{tikzcd}
& 0  \ar[dr,hook] \ar[dl,hook] & \\
\Z \ar[dr,hook] && \Z \ar[dl,hook] \\
& \Z \oplus \Z &
\end{tikzcd}
\]
where all the structure maps are inclusions.  
\end{ex}

\begin{ex}
The \textit{bicyclic monoid} $B$ is the inverse monoid presented by $[x : xx^{-1}=1 ]$.  The Sch\"utzenberger graph
$\SchL(B,x,x^{-q}x^q)$ is the semi-infinite path
\[
\begin{tikzcd}
x^q & x^{-1}x^q \ar[l,"x"'] & x^{-2}x^q \ar[l,"x"'] & \dotsc \ar[l,"x"']  & x^{-k}x^q \ar[l,"x"'] & \dotsc \ar[l,"x"'] 
\end{tikzcd}
\]
The relation module $\curlyK$ is therefore trivial.  This is no surprise: $B$ is an arboreal inverse monoid, and this Example
illustrates Lemma \ref{arboreal}.   
\end{ex}

\begin{ex}
Given an inverse monoid $M$ with presentation $[Y:R]$, we add a zero to $M$ to obtain $M^0$.  For $M^0$ we take the generating
set $X = Y \cup \{ z \}$ (with $z \not\in Y$), and we have a presentation $\Q$ of $M^0$ given by 
\[ \Q = [Y,z : R, z^2=z, yz=z=zy \; (y \in Y) ] \,.\]
In the Sch\"utzenberger graph there is a loop at $0$ for each
element of $X$.  If $[Y:R]$ has relation module $\curlyK$ then the relation module of $\Q$ can be thought of schematically as
\[ \begin{tikzcd}
\curlyK \ar[dd,"\zeta"] \\ \\
\Z^{|X|}
\end{tikzcd} \]
where the map $\zeta$ carries a circuit in $\SchL(M,Y)$ labelled by a word $w \in (Y \sqcup Y^{-1})^*$ to the element of
$\Z^{|Y|} \subset \Z^{|X|}$ determined by $w$.
\end{ex}

\begin{ex}
The symmetric inverse monoid $\I_2$ on the set $\{ 1,2 \}$ is generated by the transposition $\tau$ and the 
identity map $\varep$ on $\{ 1 \}$: then $\varep \tau \varep$ is the empty map $\mathbf 0$, and
$\I_2 = \{ \id, \tau, \varep, \tau \varep, \varep \tau, \tau \varep \tau, {\mathbf 0} \} \,.$
The Sch\"utzenberger graph $\SchL(\I_2, \{ \tau, \varep\})$ is
\[
\begin{tikzcd}
& \id \ar[rr, bend left, "\tau"] && \tau \ar[ll, bend left, "\tau"] & \\
\varep \ar[loop left,"\varep",distance=1.5cm] \ar[r,bend left,"\tau"] & \tau \varep \ar[l, bend left, "\tau"] &
& \tau \varep \tau \ar[r,bend left, "\tau"] & \varep \tau \ar[loop right,"\varep",distance=1.5cm] \ar[l,bend left,"\tau"] \\
&& \mathbf{0} \ar[loop left,"\tau",distance=1.5cm]  \ar[loop right,"\varep",distance=1.5cm] &&
\end{tikzcd}
\]
The relation module is therefore
\[
\begin{tikzcd}
& \Z \tau^2 \ar[dl,hook] \ar[dr,hook] & \\
\Z \varep \oplus \Z \tau^2 \ar[dr,hook] && \Z \tau^2 \oplus \Z \varep \ar[dl,hook] \\
& \Z \varep \oplus \Z \tau &
\end{tikzcd}
\]
\end{ex}

\section{The Squier complex of an inverse monoid presentation}
\label{sq_cx_inv_mon}
In this section we show that we can obtain a presentation of the relation module $\curlyK$, derived from an inverse monoid presentation $\curlyP = [X:\curlyR]$ with presentation map 
$\theta: \fim(X) \ra M$, from a free crossed module that is in turn derived from a 
Squier complex $\Sq(\curlyP)$ associated to $\curlyP$.

\begin{defn}
Let $\curlyP = [X:\curlyR]$ be an inverse monoid presentation of $M$, with presentation map
$\theta : \fim(X) \ra M$ factorised as in \eqref{factored}, as
$$FIM(X) \xrightarrow{\tau} \curlyT(M,X) \xrightarrow{\psi} M $$
with $\tau$ idempotent pure and $\psi$ idempotent separating. We write $\curlyT$ for $\curlyT(M,X)$: then the
\textit{Squier complex} 
$\Sq(\curlyP)$ of $\curlyP$ is the $2$--complex constructed as follows.
\begin{itemize}
\item The vertex set is $\curlyT$.
\item The edge set consists of all $4$-tuples $(p,l,r,q)$ with $p,q \in  \curlyT$ and $(l,r) \in R$. Such an edge will start at $p(l \rho \tau)q$ and end at $p(r \rho \tau)q$, so each edge corresponds to the application of a relation from $\curlyP$ in $\curlyT$. An edge path in $\Sq(\curlyP)$ therefore corresponds to a succession of such applications.
\item The 2-cells correspond to applications of non-overlapping relations, and so a 2-cell is attached along every edge path of the form:
$$\xymatrix@=6em{
\bullet \ar[d]_{(p(l \rho \tau)qp'\!, \, l',r'\!, \, q')} \ar[r]^{(p, \, l,r , \, qp'(l' \rho \tau)q')} & \bullet \ar[d]^{(p(r \rho \tau)qp', \, l',r' , \, q')} \\
\bullet \ar[r]_{(p, \, l,r, \, qp'(r' \rho \tau)q')} & \bullet}$$
This attachment of $2$--cells makes the two edge paths between $p(l \rho \tau)qp'(l' \rho \tau)q'$ and $p(r \rho \tau)qp'(r' \rho \tau)q'$ homotopic in $\Sq(\curlyP)$. 
\end{itemize}
\end{defn}

\begin{prop}
\label{monoidal_Pi_invmon}
The fundamental groupoid $\pi(Sq(\curlyP),\curlyT)$ is pseudoregular and monoidal.
\end{prop}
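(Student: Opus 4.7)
The plan is to equip the morphisms of $\pi(\Sq(\curlyP),\curlyT)$ with commuting left and right actions of the inverse monoid $\curlyT = V(\pi(\Sq(\curlyP),\curlyT))$, verify the five axioms of Definition \ref{reg_cond} to obtain pseudoregularity, and then exploit the $2$--cell attachments of $\Sq(\curlyP)$ to force $\ast = \circledast$, which by Proposition \ref{star op}(b) delivers the monoidal structure.

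For the actions, I would first define on edges
\[ x \rhd (p,l,r,q) = (xp,l,r,q) \quad \text{and} \quad (p,l,r,q) \lhd x = (p,l,r,qx), \]
extend them to edge paths componentwise, and check that they descend to homotopy classes. The key observation is that left or right multiplication by any $x \in \curlyT$ carries the four edges bounding a given $2$--cell (parametrised by $p,l,r,q,p',l',r',q'$) onto the four edges bounding the translated $2$--cell of the same form, so the $2$--cell relations are preserved. Verifying Definition \ref{reg_cond}(a)--(e) is then routine from associativity in $\curlyT$ and the identities $(p,l,r,q)\D = p(l\rho\tau)q$ and $(p,l,r,q)\R = p(r\rho\tau)q$; for instance axiom (e) follows because the constant path at $y$ is sent by $x \rhd -$ to the constant path at $xy$.

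For the monoidal condition I would first verify $\alpha \ast \beta = \alpha \circledast \beta$ when both $\alpha = (p,l,r,q)$ and $\beta = (p',l',r',q')$ are single edges. Unwinding the formulas of Proposition \ref{star op}(a) gives
\begin{align*}
\alpha \ast \beta &= (p,l,r,qp'(l'\rho\tau)q') \circ (p(r\rho\tau)qp', l', r', q'), \\
\alpha \circledast \beta &= (p(l\rho\tau)qp', l', r', q') \circ (p,l,r,qp'(r'\rho\tau)q'),
\end{align*}
which are exactly the two length-two edge paths bounding the $2$--cell displayed in the definition of $\Sq(\curlyP)$; hence they represent the same morphism in the fundamental groupoid.

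The main technical step, which I expect to be the only real obstacle, is promoting this edge-level equality to arbitrary morphisms $\alpha, \beta$ of $\pi(\Sq(\curlyP),\curlyT)$. I would proceed by a double induction on the lengths of representative edge paths, invoking the bilinearity of $\rhd$ and $\lhd$ over groupoid composition from Definition \ref{reg_cond}(d) to expand $(\alpha_1 \circ \alpha_2) \ast \beta$ and $(\alpha_1 \circ \alpha_2) \circledast \beta$ into composites that telescope once the single-edge identity is inserted at each factor. Once $\ast$ and $\circledast$ coincide on all of $\pi(\Sq(\curlyP),\curlyT)$, Proposition \ref{star op}(b) completes the proof.
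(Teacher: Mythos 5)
Your proposal is correct and follows essentially the same route as the paper: the same edge-level actions $t \rhd (p,l,r,q) = (tp,l,r,q)$ and $(p,l,r,q) \lhd t = (p,l,r,qt)$ inducing the pseudoregular structure, the same identification of $\alpha \ast \beta$ and $\alpha \circledast \beta$ for single edges with the two boundary paths of a $2$--cell, and the same induction on path length to extend the identity $\ast = \circledast$ to arbitrary morphisms. The paper's proof is just a terser statement of exactly these steps.
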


\begin{proof}
The actions of $\curlyT$ on single edges in $\Sq(\P)$ given by
\[ t \rhd (p,l,r,q) = (tp,l,r,q) \; \text{and} \; (p,l,r,q) \lhd t = (p,l,r,qt) \]
induce a pseudoregular structure on the fundamental groupoid.
The $2$--cells of $\Sq(\P)$ ensure that, if $\alpha$ and $\beta$ are the homotopy classes of edge-paths of length
$1$ in $\Sq(\P)$, then $\alpha \ast \beta = \alpha \circledast \beta$, and a straightforward induction extends this to
arbitrary edge-paths.
\end{proof}

It will be convenient in what follows to describe operations in the fundamental groupoid $\pi(\Sq(\curlyP),\curlyT)$
as being performed on edge-paths in $\Sq(\curlyP)$ rather than on fixed-end-point homotopy classes.  

Let $e \in E(\curlyT)$.
Then $\st_e^{\x}(\pi(\Sq(\curlyP),\curlyT))$ has vertex set
$K_e = \{ a \in \curlyT : a\psi = e \}$,
which we recognise as one of the groups that make up the kernel of $\psi$. 

\begin{lemma} \label{st_gp}
Let $e \in E(\curlyT)$. Then $(\starsq,\ast)$ is a group.
\end{lemma}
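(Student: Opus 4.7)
The first observation is to pin down which vertices can occur as the range of an element of $\st_e^{\x}$. Every edge $(p,l,r,q)$ of $\Sq(\curlyP)$ preserves images under $\psi : \curlyT \ra M$, since $(l,r) \in \curlyR$ forces $(l\rho\tau)\psi = (r\rho\tau)\psi$. Hence any $\alpha \in \st_e(\pi(\Sq(\curlyP),\curlyT))$ has $(\alpha\R)\psi = e\psi$, and since $\psi$ is idempotent separating this places $\alpha\R$ in the $\curlyH$--class of $e$ in $\curlyT$, which coincides with the component group $K_e$ of the Clifford semigroup $\ker\psi$ appearing in Section \ref{rel_mod_inv_mon}. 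Writing $v = \alpha\R$, we therefore have $vv^{-1} = v^{-1}v = e$ and $ev = ve = v$, so that $\alpha^{\x}\R = e(\alpha\R)e = v \in K_e$.

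Knowing that $\alpha^{\x}\R$ admits a genuine two-sided inverse at the vertex level is the key ingredient. The plan is then to propose the candidate two-sided $\ast$-inverse
\[ \gamma \; := \; v^{-1} \rhd (\alpha^{\x})^{-1}. \]
Using the compatibility axioms of Definition \ref{reg_cond}, together with $ev^{-1} = v^{-1}e = v^{-1}$ and $\alpha^{\x} \lhd e = \alpha^{\x}$, a routine check shows that $e \rhd \gamma \lhd e = \gamma$, so $\gamma \in \st_e^{\x}$. Unwinding the definition of $\ast$ from Proposition \ref{star op}(a) then gives
\[ \alpha^{\x} \ast \gamma \; = \; \alpha^{\x} \circ (v \rhd \gamma) \; = \; \alpha^{\x} \circ \bigl((vv^{-1}) \rhd (\alpha^{\x})^{-1}\bigr) \; = \; \alpha^{\x} \circ (\alpha^{\x})^{-1} \; = \; 1_e, \]
and symmetrically
\[ \gamma \ast \alpha^{\x} \; = \; \gamma \circ (v^{-1} \rhd \alpha^{\x}) \; = \; v^{-1} \rhd \bigl((\alpha^{\x})^{-1} \circ \alpha^{\x}\bigr) \; = \; v^{-1} \rhd 1_v \; = \; 1_e, \]
using axiom (e) of Definition \ref{reg_cond} at the final step. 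Since $\st_e^{\x}$ is already a monoid under $\ast$ with identity $1_e$, this proves it is a group.

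The main obstacle is the identification in the first paragraph: the existence of a genuine two-sided inverse $v^{-1}$ to $\alpha^{\x}\R$ inside $\curlyT$ itself, not merely inside the image in $M$, is what allows the rest of the argument to proceed. Once $v$ is known to lie in the group $K_e$, the construction of $\gamma$ and the verification of the two $\ast$-identities are purely formal manipulations in an arbitrary pseudoregular groupoid, and use neither the $2$--cells of $\Sq(\curlyP)$ nor the monoidality of $\pi(\Sq(\curlyP),\curlyT)$ established in Proposition \ref{monoidal_Pi_invmon}.
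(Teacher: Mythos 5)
Your proof is correct and takes essentially the same route as the paper's: both arguments hinge on the observation that $\alpha\R$ lies in the group $K_e$ with identity $e$ (so that $(\alpha\R)(\alpha\R)^{-1}=e=(\alpha\R)^{-1}(\alpha\R)$), and both take the $\ast$--inverse to be $(\alpha\R)^{-1}\rhd\alpha^{\circ}$, the paper merely writing an extra $\lhd(\alpha\D)^{-1}=\lhd\, e$ which acts trivially on elements of $\st_e^{\x}$. Your first paragraph just makes explicit the identification of the vertex set of the component of $e$ with $K_e$, which the paper asserts immediately before the lemma.
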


\begin{proof}
The set $\starsq$ is a monoid under the
operation $\ast$, and for $\alpha \in \starsq$ 
we define
\[ \alpha^* = (\alpha \R)^{-1} \rhd \alpha^{\circ} \lhd (\alpha \D)^{-1} \,,\]
where a superscript $~^{-1}$ denotes the inverse in the inverse monoid
$\curlyT$ and a superscript $~^{\circ}$ denotes the inverse in the 
groupoid $\Sq(\curlyP)$. Now
\begin{align*}
\alpha * \alpha^* &= \alpha * \big( (\alpha \R)^{-1} \rhd \alpha^{\circ} \lhd (\alpha \D)^{-1} \big) \\
&= \big( \alpha \lhd (\alpha \R)^{-1} (\alpha^{\circ}\D)(\alpha \D)^{-1} \big ) \circ \big( (\alpha \R)(\alpha \R)^{-1} \rhd \alpha^{\circ} \lhd (\alpha \D)^{-1} \big)\\
&=\big( \alpha \lhd (\alpha \R)^{-1} (\alpha \R) (\alpha \D)^{-1} \big ) \circ \big( (\alpha \R)(\alpha \R)^{-1} \rhd \alpha^{\circ} \lhd (\alpha \D)^{-1} \big).
\end{align*}

Since $\alpha \in \starsq$ we have $\alpha \D=e$, and since $\alpha \R \in K_e$ and $K_e$ is a  subgroup of $\curlyT$ with identity $e$, then
$(\alpha \R)^{-1} (\alpha \R) = e = (\alpha \R)(\alpha \R)^{-1}$. So
\[ \alpha * \alpha^* = (\alpha \lhd e) \circ (e \rhd \alpha^{\circ} \lhd e) = \alpha \circ \alpha^{\circ} =1_e \,. \]
Similarly $\alpha^* * \alpha = 1_e$, and  $\starsq$ is a group.
\end{proof}

Since we shall be working
exclusively in the groupoid $\pi(\Sq(\curlyP),\curlyT)$ hereon, we shall abbreviate $\st_e^{\x}(\pi(\Sq(\curlyP),\curlyT))$ to
$\st_e^{\x}$.

\begin{lemma}
\label{inv_in_star}
Suppose that $(ep, l,r, qe) \in \st_e^{\x}$ and set $h = l \rho \tau$ and $k = r \rho \tau$.  Then $ephqe = e$ and
\[
(ep)(hqe)(ep) = eep = ep \quad \text{and} \quad
(hqe)(ep)(hqe) = hqee = hqe \,.
\]
Therefore $ep = (hqe)^{-1}$ in $\curlyT$ and  $(ep, l,r, qe) = (eq^{-1}h^{-1}, l,r, qe)$:
moreover $e=eq^{-1}h^{-1}hqe$ and so $e \leq q^{-1}h^{-1}hq$.
\end{lemma}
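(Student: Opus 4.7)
The plan is to unwind the definitions, use that $ep$ and $hqe$ witness each other as inverses, and then invoke uniqueness of inverses in $\curlyT$.

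First I would observe that membership in $\st_e^{\x}$ forces the domain of the edge $(ep,l,r,qe)$ to be $e$. Since the domain is defined by $(ep)(l\rho\tau)(qe)=ephqe$, this gives the first claimed identity $ephqe=e$ at once. All subsequent identities will be obtained by right- or left-multiplying this equation by $ep$ or $hqe$ and repeatedly using $ee=e$ (since $e\in E(\curlyT)$). For instance,
\begin{equation*}
(ep)(hqe)(ep) = ephqeep = ephqep = (ephqe)p = ep,
\end{equation*}
where the second equality uses $ee=e$ and the last uses $ephqe=e$; and $eep=ep$ is immediate. The symmetric identity for $(hqe)(ep)(hqe)=hqe$ is proved in exactly the same way, by grouping the middle $ep$ against the surrounding $e$'s.

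Next, I would apply uniqueness of inverses in the inverse monoid $\curlyT$: the two identities just proved are precisely the axioms characterising the inverse semigroup inverse, so $ep$ and $hqe$ are mutually inverse in $\curlyT$. Thus $ep=(hqe)^{-1}=e^{-1}q^{-1}h^{-1}=eq^{-1}h^{-1}$, using $e^{-1}=e$ for the idempotent $e$. Substituting this alternative form into the edge label yields $(ep,l,r,qe)=(eq^{-1}h^{-1},l,r,qe)$.

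Finally, for the natural-order statement, I would substitute $ep=eq^{-1}h^{-1}$ back into $ephqe=e$ to obtain
\begin{equation*}
e = eq^{-1}h^{-1}hqe.
\end{equation*}
The element $f:=q^{-1}h^{-1}hq=(hq)^{-1}(hq)$ is an idempotent of $\curlyT$, and in any inverse semigroup idempotents commute, so $efe=ef=fe$. Hence $e=ef$, which by definition of the natural partial order on an inverse semigroup gives $e\leq q^{-1}h^{-1}hq$, as required.

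No genuine obstacle arises: the argument is a disciplined use of $ee=e$, associativity, and the uniqueness of inverses. The only point requiring attention is to keep careful track of the parenthesisations so that each cancellation of a double $e$ is visibly legitimate, and to note that $q^{-1}h^{-1}hq$ lies in $E(\curlyT)$ so that idempotent commutativity applies in the final step.
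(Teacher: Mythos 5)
Your proof is correct and follows essentially the same route the paper takes (the paper embeds the computation directly in the statement of the lemma rather than giving a separate proof): read off $ephqe=e$ from the domain condition, verify the two mutual-inverse identities by cancelling repeated $e$'s, invoke uniqueness of inverses in $\curlyT$, and finish with idempotent commutativity for the order claim. No gaps.
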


\begin{lemma} \label{lambda2}
A path $\alpha \in \st_e^{\x}$ can be written as a product of single edges in the group $(\st_e^{\x},\ast)$. Hence $\st_e^{\x}$ is generated by the subset $\Sigma_e^{\x}$ of homotopy classes of single edges in $\st_e^{\x}$, and these classes are
represented by edges of the form
$\lambda^e_{l,r,q} = (eq^{-1}(l^{-1}) \rho \tau,l,r,qe)$
with $e \leq q((l^{-1}l)\rho \tau) q$.
\end{lemma}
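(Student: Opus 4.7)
My plan is to handle the two claims in tandem, using Lemma~\ref{inv_in_star} to identify the single-edge generators $\lambda^e_{l,r,q}$, and an induction on edge-path length to decompose a general path, exploiting the pseudoregular and monoidal structure on $\pi(\Sq(\curlyP),\curlyT)$ supplied by Proposition~\ref{monoidal_Pi_invmon}.

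For the single-edge case: if $\alpha\in\st_e^{\x}$ is represented by one 4-tuple, Lemma~\ref{inv_in_star} already gives $\alpha=(eq^{-1}(l^{-1})\rho\tau,\,l,\,r,\,qe)=\lambda^e_{l,r,q}$. The side condition $e\leq q^{-1}((l^{-1}l)\rho\tau)q$ is just a reformulation of $\alpha\D=e$: expanding the domain gives $efe=e$ with $f=q^{-1}((l^{-1}l)\rho\tau)q$, and since $f$ is idempotent, $efe=e$ forces $ef=e$, i.e.\ $e\leq f$.

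The core of the proof is the inductive decomposition. I would first establish the preliminary fact that every intermediate vertex $t_i$ of an edge-path representative $\gamma_1\circ\cdots\circ\gamma_n$ of $\alpha$ lies in the group $K_e$. Since applying a relation preserves the $\psi$-image, each $t_i$ satisfies $t_i\psi=e\psi$; the idempotent $t_it_i^{-1}$ then also has $\psi$-image $e\psi$, and because $\psi$ is idempotent separating, $t_it_i^{-1}=e$ (and likewise $t_i^{-1}t_i=e$). I would then induct on $n$, with base case $n=1$ dispatched by the first part. For the step, write $\alpha=\gamma\circ\alpha'$ with $t=\gamma\R=\alpha'\D\in K_e$, and set
\[
\gamma^{\x}=e\rhd\gamma\lhd e,\qquad \bar\alpha'=t^{-1}\rhd\alpha'\lhd e.
\]
Using $et^{-1}=t^{-1}$ (since $t^{-1}\in K_e$) and the axioms of Definition~\ref{reg_cond}, one verifies $\bar\alpha'\D=t^{-1}te=e$, $\bar\alpha'\R=t^{-1}k\in K_e$, and $e\rhd\bar\alpha'\lhd e=\bar\alpha'$, so $\bar\alpha'\in\st_e^{\x}$; moreover $\bar\alpha'$ has edge-path length $n-1$.

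The heart of the argument is the identity $\alpha=\gamma^{\x}\ast\bar\alpha'$. Applying $\mu\ast\nu=(\mu\lhd\nu\D)\circ(\mu\R\rhd\nu)$ from Proposition~\ref{star op} together with the computation
\[
t\rhd(t^{-1}\rhd\alpha'\lhd e)=(tt^{-1})\rhd\alpha'\lhd e=e\rhd\alpha'\lhd e,
\]
the expression collapses via the distributivity of $\rhd,\lhd$ over $\circ$ to
\[
\gamma^{\x}\ast\bar\alpha'=(e\rhd\gamma\lhd e)\circ(e\rhd\alpha'\lhd e)=e\rhd(\gamma\circ\alpha')\lhd e=e\rhd\alpha\lhd e=\alpha.
\]
Feeding the inductive hypothesis into $\bar\alpha'$ then writes $\alpha$ as a $\ast$-product of single edges in $\st_e^{\x}$, each of the claimed form $\lambda^e_{l,r,q}$. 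The principal obstacle I anticipate is the preliminary fact that intermediate vertices sit in $K_e$: without it, $t^{-1}\rhd$ would not produce an element of $\st_e^{\x}$, and the clean splitting $\gamma^{\x}\ast\bar\alpha'$ would be unavailable.
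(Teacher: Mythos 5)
Your proof is correct and follows essentially the same route as the paper: your key identity $\alpha=\gamma^{\x}\ast\bar\alpha'$ is precisely the paper's multiplicativity statement $(\alpha_1\circ\alpha_2)\lambda=\alpha_1\lambda\ast\alpha_2\lambda$ for the normalisation map $\alpha\lambda=(\alpha\D)^{-1}\rhd\alpha\lhd e$, specialised to $\alpha_1\D=e$, and your preliminary observation that the intermediate vertices lie in $K_e$ is likewise what the paper relies on. The only difference is packaging: the paper establishes multiplicativity for arbitrary paths in the component once and for all (and reuses it in the subsequent discussion of cancelling pairs and $2$--cells), whereas you re-normalise the tail at each inductive step, which unwinds to the same formula $\alpha=\gamma_1\lambda\ast\cdots\ast\gamma_n\lambda$.
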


\begin{proof}
The vertex set of  the connected component of $\Sq(\curlyP)$ that contains $e$ is the group $K_e$ (with identity $e$), and for a path $\alpha$ in this component we define
\[ \alpha \lambda = (\alpha \D)^{-1} \rhd \alpha \lhd e \,. \]
If $\alpha =(p,l,r,q)$ is a single edge and $h  = l \rho \tau$, then
\[
\alpha \lambda = (q^{-1}h^{-1}p^{-1}p,l,r,qe) = (eq^{-1}h^{-1}p^{-1}p,l,r,qe) \]
since $phq \in K_e$.  Then by Lemma \ref{inv_in_star}, we have $eq^{-1}h^{-1}p^{-1}p = (hqe)^{-1}$ and so
\[ \alpha \lambda = (eq^{-1}h^{-1},l,r,qe) = \lambda^e_{l,r,q} \,, \]
and $(\alpha \lambda)\D = eq^{-1}h^{-1}hqe=e$.

Now if $\alpha = \alpha_1 \circ \alpha_2$ then
\begin{align*}
\alpha \lambda &= (\alpha_1 \D)^{-1} \rhd ( \alpha_1 \circ \alpha_2) \lhd e \\
&= ((\alpha_1 \D)^{-1} \rhd \alpha_1 \lhd e) \circ ((\alpha_1 \D)^{-1} \rhd \alpha_2 \lhd e) \\
&= \alpha_1 \lambda \circ ((\alpha_1 \D)^{-1} \rhd \alpha_2 \lhd e) \,,
\end{align*}
and 
\[ \alpha_1 \lambda \ast \alpha_2 \lambda = (\alpha_1 \lambda \lhd e) \circ ((\alpha_1 \D)^{-1}(\alpha_1 \R)e(\alpha_2 \D)^{-1} 
\rhd \alpha_2 \lhd e) \,.\]
But $\alpha_1 \R = \alpha_2 \D \in K_e$ and so $(\alpha_1 \D)^{-1}(\alpha_1 \R)e(\alpha_2 \D)^{-1} = (\alpha_1 \D)^{-1}$
and therefore $\alpha  \lambda = \alpha_1 \lambda \ast \alpha_2 \lambda$.  The Lemma then follows easily by induction on the length of a
path.
\end{proof}

Now suppose that a path $\alpha$ with $\alpha \D=e$ is a composition $\alpha = \alpha_1 \circ \alpha_2$ and that $\beta$ is the path
\[ \beta = \alpha_1 \circ \gamma \circ \gamma^{\circ} \circ \alpha_2 \]
for some path $\gamma$.  Then  \[ \beta \lambda = \alpha_1 \lambda \ast \gamma \lambda \ast \gamma^{\circ} \lambda \ast \alpha_2 
\lambda \,. \]
Now if $x = \gamma \D$ and $y= \gamma \R$ then
\begin{align*}
\gamma \lambda \ast \gamma^{\circ} \lambda &= (x^{-1} \rhd \gamma \lhd e) \ast (y^{-1} \rhd \gamma^{\circ} \lhd e) \\
&= (x^{-1} \rhd \gamma \lhd e) \circ (x^{-1}y \rhd y^{-1} \rhd \gamma^{\circ} \lhd e) \\
&= (x^{-1} \rhd \gamma \lhd e) \circ (x^{-1} \rhd \gamma^{\circ} \lhd e) \\
&= x^{-1} \rhd (\gamma \circ  \gamma^{\circ}) \lhd e \\
&= x^{-1} \rhd 1_x \lhd e = 1_e \,.
\end{align*}
Hence if $\alpha$ and $\beta$ are paths differing by a cancelling pair of edges in $\Sq(\P)$ then $\alpha \lambda = \beta \lambda$ in the group
$(\st^{\x}_e,\ast)$.

Now consider a $2$--cell in the component of $\Sq(\P)$ containing $e$:
\begin{equation}
\label{general_2-cell_2}
\xymatrix@=6em{
\bullet \ar[d]_{(p(l \rho \tau)qt,  s,d, u)} \ar[r]^{(p, l,r, qt(s \rho \tau)u)} & \bullet \ar[d]^{(p(r \rho \tau)qt, s,d, u)} \\
\bullet \ar[r]_{(p, l,r, qt(d \rho \tau)u)} & \bullet}
\end{equation}
with
\begin{align*}
 \alpha &=(p, l,r, qt(s \rho \tau)u), \beta=(p(r \rho \tau)qt,s,d,u), \gamma=(p(l \rho \tau)qt,s,d,u), 
\\ \delta &= (p, l,r, qt(d \rho \tau)u) \,. \end{align*}
Then by Lemma \ref{inv_in_star}, $\alpha \lambda =  \lambda^e_{l,r,qt(s \rho \tau)u}$,
$\beta \lambda = \lambda^e_{s,d,u} = \gamma \lambda$ and $\delta \lambda = \lambda^e_{l,r,qt(d \rho \tau)u}$.
Hence path homotopy induced by the above $2$--cell in $\Sq(\P)$ is equivalent to the relation
\[ \lambda^e_{l,r,v(s \rho \tau)u} \ast \lambda^e_{s,d,u} = \lambda^e_{s,d,u} \ast \lambda^e_{l,r,v(d \rho \tau)u} \]
(where $v=qt$ above).  These considerations show that:

\begin{prop}
\label{def_rels_for_im_star}
Given $e \in E(\curlyT), q \in \curlyT$ and $(l,r) \in \curlyR$ with $e \leq  q^{-1}((l^{-1}l)\rho \tau)q$, we set
$\lambda^e_{l,r,q} = (eq^{-1}(l^{-1} \rho \tau),l,r,qe)$.  Then the following are a set of defining relations for the group
$(\st_e^{\x},\ast)$ on the generating set $\Sigma_e^{\x}$:
\[ \lambda^e_{l,r,v(s \rho \tau)u} \ast \lambda^e_{s,d,u} = \lambda^e_{s,d,u} \ast \lambda^e_{l,r,v(d \rho \tau)u} \, . \]
\end{prop}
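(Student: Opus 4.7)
The plan is to collect the preparatory material in the paragraphs leading up to the statement. By Lemma~\ref{lambda2}, $\Sigma_e^{\x}$ generates $(\st_e^{\x},\ast)$ as a group, yielding a surjective homomorphism $\phi$ from the free group $F(\Sigma_e^{\x})$ onto $(\st_e^{\x},\ast)$. The calculation preceding the statement, applied to a generic $2$-cell of $\Sq(\curlyP)$, shows that $\lambda$ carries the two edge-path sides of the boundary to the two sides of the proposed relation, so each relation already holds in $(\st_e^{\x},\ast)$. Thus $\phi$ factors through a surjective homomorphism $\overline\phi : \Gamma \to (\st_e^{\x},\ast)$, where $\Gamma$ is the group with the stated presentation.

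The main task is to prove injectivity of $\overline\phi$, which I would do by constructing a one-sided inverse $\Psi : (\st_e^{\x},\ast) \to \Gamma$ at the level of edge paths and then checking homotopy invariance. Given an edge path $\alpha$ with $\alpha \D = e$ in the component of $e$, write $\alpha = \alpha_1 \circ \cdots \circ \alpha_n$ as a concatenation of single directed edges of $\Sq(\curlyP)$ and their formal reverses, and define $\Psi(\alpha) \in \Gamma$ to be the ordered product of the generators $\alpha_i \lambda$ when $\alpha_i$ is a forward edge, and of the $\Gamma$-inverses of $\alpha_i^{\circ} \lambda$ when $\alpha_i$ is a reverse. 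Two verifications are then needed: first, insertion or deletion of a cancelling pair $\gamma \circ \gamma^{\circ}$ contributes $\zeta \cdot \zeta^{-1} = 1$ in $\Gamma$ by construction; second, replacing one side of a $2$-cell boundary by the other leaves $\Psi$ unchanged, because this replacement is precisely the defining relation of $\Gamma$ according to the $2$-cell calculation already performed. Since the fundamental groupoid $\pi(\Sq(\curlyP),\curlyT)$ is generated modulo exactly these two kinds of moves, $\Psi$ descends to fixed-end-point homotopy classes. Restricting to loops that lie in $\st_e^{\x}$ then yields an inverse to $\overline\phi$.

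The delicate point, and the one I expect to require most care, is aligning the ordinary group product in $\Gamma$ with the $\ast$-product in $\st_e^{\x}$ under the decomposition $\alpha = \alpha_1 \circ \cdots \circ \alpha_n$. The relevant compatibility is supplied by the multiplicativity of $\lambda$ already proved inside Lemma~\ref{lambda2}, which shows that $\lambda$ converts groupoid composition of consecutive edges into the $\ast$-product. A small amount of bookkeeping is then needed to reconcile the two notions of inverse available on an edge, namely the groupoid inverse $\alpha^{\circ}$ and the $\ast$-inverse $\alpha^{\ast}$ provided by Lemma~\ref{st_gp}: the point is that after whiskering by $e$ these coincide, so the definition of $\Psi$ on reversed edges via the abstract $\Gamma$-inverse is consistent with the $\ast$-structure. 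Given this, $\overline\phi \circ \bar\Psi$ is the identity on $(\st_e^{\x},\ast)$ and $\bar\Psi \circ \overline\phi$ is the identity on the generating set $\Sigma_e^{\x}$, hence on all of $\Gamma$, completing the proof.
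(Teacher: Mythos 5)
Your proposal is correct and follows essentially the same route as the paper: the paper's ``proof'' is precisely the discussion preceding the statement (generation via Lemma~\ref{lambda2}, invariance of $\lambda$ under insertion of cancelling pairs, and the $2$--cell computation), capped by ``These considerations show that'', and you have simply made the implicit final step explicit by building the inverse homomorphism into the presented group and checking it respects the two homotopy moves. The only quibble is terminological: elements of $\st_e^{\x}$ are (whiskered) homotopy classes of paths \emph{starting} at $e$, not loops, but this does not affect your argument since $\Psi$ is defined on all edge paths in the component of $e$.
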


\subsection{A crossed module from an inverse monoid presentation}
\label{cm_from_im}
As in Example \ref{trace_gpd}, we regard $\curlyT$ as a groupoid $\vec{\curlyT}$ (although we shall drop the arrow superscript hereon)
with vertex set $E=E(\curlyT)$, and we define 
\[ S^{\x} = \bigsqcup_{e \in E} \starsq \,.\]

\begin{prop}
\label{pres_cm_gpd_1}
$S^{\x} \xrightarrow{\R} \curlyT \rightrightarrows E$ is a crossed module of groupoids.
\end{prop}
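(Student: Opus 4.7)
The plan is to verify the three ingredients of a crossed module of groupoids: that $\R$ is a morphism of groupoids, that there is a compatible action of $\curlyT$ on $S^{\x}$, and that the two axioms \eqref{cm_gpd_1} and \eqref{cm_gpd_2} hold.

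First, by Lemma \ref{st_gp} each $\st_e^{\x}$ is a group with identity $1_e$, so $S^{\x}$ is already presented as a disjoint union of groups indexed by $E$. Given $\alpha \in \st_e^{\x}$, the endpoint $\alpha \R$ lies in $K_e$, which is a group with identity $e$ in $\curlyT$; hence when $\curlyT$ is regarded as the groupoid $\vec{\curlyT}$ (Example \ref{trace_gpd}), $\alpha \R$ sits in the local group at $e$. That $\alpha \mapsto \alpha \R$ preserves multiplication is immediate from Definition \ref{reg_cond}(c): applying $\R$ to $\alpha \ast \beta = (\alpha \lhd \beta \D) \circ (\alpha \R \rhd \beta)$ gives $(\alpha \ast \beta)\R = (\alpha \R)(\beta \R)$.

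Next, I define the action: for $t \in \curlyT$ viewed as the arrow $t : e \to f$ in $\vec{\curlyT}$ (so $tt^{-1} = e$ and $t^{-1}t = f$), and for $\alpha \in \st_e^{\x}$, set
\[ \alpha^t = t^{-1} \rhd \alpha \lhd t. \]
Direct manipulation with the axioms of Definition \ref{reg_cond} shows that $\alpha^t \D = t^{-1}et = f$ and $f \rhd \alpha^t \lhd f = \alpha^t$, so $\alpha^t \in \st_f^{\x}$. Functoriality — that $\alpha^{1_e} = \alpha$ and $(\alpha^s)^t = \alpha^{st}$ whenever $s \R = t \D$ — follows from parts (a) and (b) of the same definition; distributivity of whiskering over $\circ$ in part (d) shows each $t$ acts on $\st_e^{\x}$ by a group homomorphism.

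Axiom \eqref{cm_gpd_1}, $(\alpha^t) \R = t^{-1}(\alpha \R) t$, is then again immediate from the range axioms. The heart of the proof is the Peiffer axiom \eqref{cm_gpd_2}: for $\alpha, \beta \in \st_e^{\x}$,
\[ \alpha^{\beta \R} = (\beta \R)^{-1} \rhd \alpha \lhd (\beta \R) = \beta^{-1} \ast \alpha \ast \beta. \]
I would prove this by invoking the monoidal property of $\pi(\Sq(\curlyP), \curlyT)$ from Proposition \ref{monoidal_Pi_invmon}. Since $\alpha$ and $\beta$ both have $\D = e$, the formula $\alpha \ast \beta = (\alpha \lhd \beta \D) \circ (\alpha \R \rhd \beta)$ simplifies to $\alpha \ast \beta = \alpha \circ (\alpha \R \rhd \beta)$, and monoidality gives the alternative expression $\alpha \ast \beta = \alpha \circledast \beta = \beta \circ (\alpha \lhd \beta \R)$. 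A short calculation using these two expressions, together with Definition \ref{reg_cond}(a), then yields
\[ \beta \ast \bigl((\beta \R)^{-1} \rhd \alpha \lhd (\beta \R)\bigr) = \beta \circ (\alpha \lhd \beta \R) = \alpha \ast \beta, \]
and left-multiplying by $\beta^{-1}$ in the group $(\st_e^{\x}, \ast)$ delivers the Peiffer identity. The main obstacle is this last step: everything else reduces to routine unpacking of the semiregular axioms, but the Peiffer identity genuinely requires the coincidence $\ast = \circledast$ supplied by monoidality.
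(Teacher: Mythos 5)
Your proposal is correct and follows essentially the same route as the paper's proof: the same action $\alpha^t = t^{-1} \rhd \alpha \lhd t$, CM1 read off from the range axioms of Definition \ref{reg_cond}, and CM2 obtained by using monoidality to rewrite $\alpha \ast \beta$ as $\alpha \circledast \beta = \beta \circ (\alpha \lhd \beta\R)$ and matching it with $\beta \ast \alpha^{\beta\R}$. You spell out the functoriality of the action and the homomorphism property of $\R$ in more detail than the paper does, but the substance is identical.
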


\begin{proof}
By Lemma \ref{st_gp}, each $\st_e^{\x}$ is a group, and so $S^{\x}$ is a disjoint union of groups indexed by $E$, and is a groupoid with vertex set $E$. 
Then $\R$ is a groupoid homomorphism, and is the identity on $E$.

An action of $\curlyT$ on $S^{\x}$ is defined using the actions in the
pseudoregular groupoid $\pi(\Sq(\curlyP),\curlyT)$ as follows: 
for
$w \in \curlyT$ and $\alpha \in \st_{ww^{-1}}^{\x}$ we define
$$\alpha^w = w^{-1} \rhd \alpha \lhd w \in  \st_{w^{-1}w}^{\x} \,.$$
Then CM1 holds, since 
$(\alpha^w)\R = (w^{-1} \rhd \alpha \lhd w) \R= w^{-1} (\alpha \R) w$.
For CM2, since the binary operations $\ast$ and $\circledast$ on $\pi(Sq(\curlyP),\curlyT)$ coincide by Proposition \ref{monoidal_Pi_invmon}, then
$$\alpha \ast \beta = \alpha \circledast \beta = \beta \circ (\alpha \lhd \beta \R)$$
and
$$\beta \ast \alpha^{\beta \R} = \beta \ast ((\beta \R)^{-1} \rhd \alpha \lhd \beta \R) = \beta \circ (\alpha \lhd \beta \R) \,.$$
So $\alpha \ast \beta = \beta \ast \alpha^{\beta \R}$.
Therefore $S^{\x} \xrightarrow{\R} \curlyT \rightrightarrows E$ is a crossed module of groupoids.
\end{proof}

We shall now show that the crossed module $S^{\x} \xrightarrow{\R} \curlyT \rightrightarrows E$ is free, and give an explicit basis.
To do this, we give a construction of a free crossed $\curlyT(M,X)$--module
directly  from an inverse monoid presentation
$\curlyP = [X: \curlyR]$ of $M$ and show that it is isomorphic to the one in Proposition \ref{pres_cm_gpd_1}.   

Suppose that $(l,r) \in \curlyR$, and set $h =l \rho \tau$ and $k = r \rho \tau$.  Then
$h\psi = k\psi$ and so $(h^{-1}k) \psi \in E(M)$. Since $\psi : \curlyT \ra M$ is idempotent separating,  $(h^{-1}k) \psi = x \psi$ for a unique $x \in E(\curlyT)$ with
\[ (h^{-1}k) \psi = (h^{-1}h)\psi = (k^{-1}k) \psi \quad \text{and} \quad  (hk^{-1}) \psi = (hh^{-1})\psi = (kk^{-1}) \psi \,.\]
Hence $h^{-1}h= x = k^{-1}k$ and $hh^{-1}=kk^{-1}$.  So for $x \in E(\curlyT)$ we  define
\[
R_x =  \{ (l,r,x) \in \curlyR \times E : (l^{-1}r)\rho \tau \psi \geq x \psi \} \,, \]
and  consider the set $R = \bigsqcup_{x \in E} R_x$, 
along with the function $\omega: R \rightarrow \curlyT$ which maps $(l,r,x) \mapsto x((l^{-1}r) \rho \tau)x$. 
Then
\[ (l,r,x) \omega \D = xh^{-1}kxk^{-1}hx \; \text{and} \; (l,r,x) \omega \R = xk^{-1}hxh^{-1}kx \,, \]
with
\[ ((l,r,x) \omega \D) \psi  = (xh^{-1}kxk^{-1}hx) \psi = x \psi  =(xk^{-1}hxh^{-1}kx) \psi = ((l,r,x) \omega \R) \psi \,. \]
Since $\psi$ is idempotent separating, we conclude that $(l,r,x) \omega \D = (l,r,x) \omega \R$.
The free crossed $\curlyT$-module $C \xrightarrow{\partial} \curlyT \rightrightarrows E(M)$ with basis $\omega$
is then constructed as in Proposition \ref{free_pcm_gpds}. 

\begin{thm}
\label{pres_fcm_gpd}
The crossed $\curlyT$--module
$S^{\x} \xrightarrow{\R} \curlyT \rightrightarrows E(M)$
is isomorphic to  the free crossed $\curlyT$--module 
$C \xrightarrow{\partial} \curlyT \rightrightarrows E(M)$.
\end{thm}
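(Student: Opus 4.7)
The plan is to construct a morphism $\Phi : C \to S^{\x}$ of crossed $\curlyT$-modules from the universal property of $C$, and then show $\Phi$ is an isomorphism by producing an inverse.

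First, I define a basis assignment $\sigma : R \to S^{\x}$. Given $(l,r,x) \in R_x$ with $h = l\rho\tau$ and $k = r\rho\tau$, the equality $h\psi = k\psi$ in $M$ forces $(l^{-1}r)\rho\tau\psi = (h^{-1}h)\psi$, so the defining condition $(l^{-1}r)\rho\tau\psi \geq x\psi$ becomes $(h^{-1}h)\psi \geq x\psi$. Since $\psi$ is idempotent separating, its restriction to $E(\curlyT)$ both preserves and reflects the natural partial order, so $x \leq h^{-1}h$ in $\curlyT$. Set $\sigma(l,r,x) = \lambda^x_{l,r,1} = (xh^{-1},l,r,x)$; this is a well-formed edge in $\st_x^{\x}$ with domain $x$ and range $xh^{-1}kx = \omega(l,r,x)$, so $\sigma\R = \omega$. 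Proposition \ref{free_pcm_gpds} then supplies a unique morphism $\Phi : C \to S^{\x}$ with $\nu\Phi = \sigma$ and $\Phi\R = \partial$.

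For surjectivity of $\Phi$, Lemma \ref{lambda2} identifies a generating set $\Sigma_e^{\x} = \{\lambda^e_{l,r,q} : e \leq q^{-1}h^{-1}hq\}$ of $(\st_e^{\x},\ast)$. Given such a generator, set $g = qe$ and $y = qeq^{-1}$. Using that idempotents in $\curlyT$ commute, together with $e \leq q^{-1}h^{-1}hq \leq q^{-1}q$, one checks that $y$ is idempotent, $y \leq h^{-1}h$, $g^{-1}g = e$ and $gg^{-1} = y$, so $(l,r,y) \in R_y$. A direct computation in $\pi(\Sq(\curlyP),\curlyT)$ yields $\sigma(l,r,y)^g = \lambda^e_{l,r,q}$, so $\Phi(\nu(l,r,y)^g) = \lambda^e_{l,r,q}$, and $\Phi$ surjects onto $\Sigma_e^{\x}$, hence onto all of $\st_e^{\x}$.

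For injectivity I would construct an inverse by defining $\Psi : S^{\x} \to C$ on generators by $\Psi(\lambda^e_{l,r,q}) := \nu(l,r,qeq^{-1})^{qe}$ and verifying that $\Psi$ respects the relations of Proposition \ref{def_rels_for_im_star}. The key step is to show that
\[ \lambda^e_{l,r,v(s\rho\tau)u} \ast \lambda^e_{s,d,u} = \lambda^e_{s,d,u} \ast \lambda^e_{l,r,v(d\rho\tau)u} \]
becomes a Peiffer consequence in $C$. Writing $h' = s\rho\tau$ and $k' = d\rho\tau$, applying the Peiffer identity $ac = ca^{c\partial}$ (CM2) in $C$ to $\Psi$ of the left-hand side rearranges it into $\Psi(\lambda^e_{s,d,u})\cdot\nu(l,r,y_1)^{g_1\mu}$, where $g_1 = vh'ue$, $y_1 = g_1g_1^{-1}$, and $\mu = \Psi(\lambda^e_{s,d,u})\partial = eu^{-1}(h')^{-1}k'ue$. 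On the other hand, $\Psi$ of the right-hand side is $\Psi(\lambda^e_{s,d,u})\cdot\nu(l,r,y_2)^{g_2}$ with $g_2 = vk'ue$ and $y_2 = g_2g_2^{-1}$. Matching the second factors reduces to verifying $y_1 = y_2$ and $g_1\mu = g_2$ in $\curlyT$; and after routine simplification of the restricted products, both equalities reduce to the single idempotent identity
\[ (h'ue)(h'ue)^{-1} = (k'ue)(k'ue)^{-1} \]
in $\curlyT$. This holds because both sides are idempotents whose $\psi$-images coincide (since $h'\psi = k'\psi$), and $\psi$ is idempotent separating. This is the heart of the argument and the main obstacle: Peiffer supplies the rearrangement, while idempotent separation of $\psi$ (the structural feature built into the factorisation in Section \ref{rel_mod_inv_mon}) is precisely what forces the conjugate idempotents to agree in $\curlyT$. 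Once $\Psi$ is shown well-defined and a morphism of crossed $\curlyT$-modules with $\Psi\partial = \R$, the equalities $\Phi\Psi = \id_{S^{\x}}$ and $\Psi\Phi = \id_C$ follow on generators, the latter via the uniqueness clause of Proposition \ref{free_pcm_gpds} applied to $\Psi\Phi$, and $\Phi$ is the desired isomorphism.
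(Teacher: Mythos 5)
Your proposal is correct and follows essentially the same route as the paper: the same basis assignment $(l,r,x)\mapsto(xh^{-1},l,r,x)$ with $\nu\R=\omega$ feeding the universal property of $C$, and the same inverse on the generators of Proposition \ref{def_rels_for_im_star} (your $\nu(l,r,qeq^{-1})^{qe}$ is exactly the paper's generator $(l,r,qe)$), with the decisive step in both being that idempotent separation of $\psi$ forces $(gue)(gue)^{-1}=(tue)(tue)^{-1}$ so that the defining relations of $(\st_e^{\x},\ast)$ map to consequences of the Peiffer relations in $C_e$. The only difference is presentational: you spell out surjectivity via Lemma \ref{lambda2} explicitly, which the paper leaves implicit.
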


\begin{proof}
For $(l,r,x) \in R$, we retain the notation $h = l \rho \tau$ and $k = r \rho \tau$.
We define $\nu: R \rightarrow S^{\x}$ by $(l,r,x) \mapsto (xh^{-1}, l,r, x)$.  We then have
$(xh^{-1}, l,r, x)\D = xh^{-1}hx = x$, and so   $(l,r,x)\nu \in \st_x^{\x}$.  Moreover, 
\[ (l,r,x)\nu \R = (xh^{-1},l,r,x) \R =x h^{-1}kx = (l,r,x) \omega \,.\]
Therefore $\nu\R=\omega$ and by freeness of $C \xrightarrow{\partial} \curlyT \rightrightarrows E(M)$ there exists a crossed module morphism  $\eta : C \rightarrow S^{\x}$ mapping 
\[ (l,r, u) \mapsto  (u^{-1}h^{-1}, l,r, u) \in \st^{\x}(u^{-1}u) \,, \]
where $(h^{-1}k) \psi \geq  (uu^{-1})\psi$ and $h^{-1}h = k^{-1}k \geq uu^{-1}$.
We claim that $\eta$ is an isomorphism, and we verify this by constructing its inverse.

We define
$\mu: \Sigma_e^{\x} \ra C_e$ by $\mu : \lambda^e_{l,r,q} \mapsto (l,r, qe)$, where 
$\lambda^e_{l,r,q}$ is defined in Proposition \ref{def_rels_for_im_star}.  We
consider the effect of this map on a defining relation 
\[ \lambda^e_{l,r,v(s \rho \tau)u} \ast \lambda^e_{s,d,u} = \lambda^e_{s,d,u} \ast \lambda^e_{l,r,v(d \rho \tau)u} \, . \]
as given in Proposition \ref{def_rels_for_im_star}.  We set $g = s \rho \tau$ and $t = d \rho \tau$.  Then
\[  
\lambda^e_{l,r,vgu} \stackrel{\mu}{\mapsto} (l,r,vgue), \;
\lambda^e_{s,d,u}  \stackrel{\mu}{\mapsto} (s,d,ue), \; \text{and} \;
\lambda^e_{l,r,vtu}  \stackrel{\mu}{\mapsto} (l,r,vtue) \,.
\]
In the group $C_e$ we have
\[ (s,d,ue)^{-1}(l,r,vgue)(s,d,ue) = (l,r,vgueu^{-1}g^{-1}tue) \,. \]
Now $g \psi = t \psi$ and so
\[ [(gue)(gue)^{-1}] \psi = [(tue)(tue)^{-1}]\psi \,.\]
Since $\psi$ is idempotent separating, $(gue)(gue)^{-1} = (tue)(tue)^{-1}$ and therefore 
\[ vgueu^{-1}g^{-1}tue = v(gue)(gue)^{-1}tue = v(tue)(tue)^{-1}(tue) = vtue \,.\]
So in $C_e$ we have 
\[ (s,d,ue)^{-1}(l,r,vgue)(s,d,ue) = (l,r,vtue) \]
and $\mu$ induces a homomorphism $\st^{\x}_e \ra C_e$ that is the inverse of $\eta$.
\end{proof}

\noindent
As a module for the groupoid $\vec{M}$, we see by Proposition \ref{free_cm_and_free_mod} that
$(S^{\x})^{ab}$ is free, with basis function
$(l,r,u) \mapsto \ol{(u^{-1}l^{-1},l,r,u)}$.
However, we can say more.

\begin{prop}
\label{free_log_mod}
$(S^{\x})^{ab}$ is the free $M$--module on the $E(M)$--set $\curlyZ$ in which
$Z_e = \{(l,r) \in R : (l^{-1}r) \rho \tau\psi = e \psi \}$.
\end{prop}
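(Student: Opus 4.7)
The plan is to deduce the statement directly from the free $\vec{M}$--module structure supplied by Proposition \ref{free_cm_and_free_mod}(2), by checking the Lausch universal property. First I would pin down the quotient groupoid. The edges of $\Sq(\curlyP)$ implement precisely the generating pairs $(l \rho \tau, r \rho \tau)$ of the idempotent separating congruence $\chi_{\psi}$ on $\curlyT$, so the connected component of each idempotent $e$ in $\Sq(\curlyP)$ is the fibre $K_e = \psi^{-1}(e \psi)$. Since $\alpha \R$ ranges over the whole of $K_e$ as $\alpha$ runs through $\st_e^{\x}$, one gets $S^{\x} \R = K = \ker \psi$, whence $\vec{\curlyT}/S^{\x}\R = \vec{\curlyT}/K = \vec{M}$. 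Combining this with Theorem \ref{pres_fcm_gpd} and Proposition \ref{free_cm_and_free_mod}(2) realises $(S^{\x})^{ab}$ as the free $\vec{M}$--module on $\omega \pi : R \to V(\vec{M}) = E(M)$, and since $(l^{-1}r) \rho \tau \geq x$ in $E(\curlyT)$ one checks $\omega \pi(l,r,x) = x\psi$.

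Next I would verify the Lausch universal property on $\curlyZ$ by exploiting this freeness. Given a Lausch $M$--module $\curlyB$ and a family $\{\mu_e : Z_e \to B_e\}_{e \in E(M)}$, extend $\mu$ to $\nu' : R \to \curlyB$ by $\nu'(l,r,x) = \mu_f((l,r)) \lhd (x\psi)$, where $f = (l^{-1}r) \rho \tau \psi$. Lausch axiom (iii) places $\nu'(l,r,x)$ in $B_{x\psi}$, matching $\omega\pi$, so the free $\vec{M}$--module property produces a unique $\vec{M}$--morphism $\phi : (S^{\x})^{ab} \to \curlyB$ with $\phi(\ol{\nu(l,r,x)}) = \nu'(l,r,x)$. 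Taking $x = \tilde{f}$, the unique preimage of $f$ in $E(\curlyT)$, gives $\phi(\ol{\nu(l,r,\tilde{f})}) = \mu_f((l,r)) \lhd f = \mu_f((l,r))$, so $\phi$ extends $\mu$.

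The remaining task is to see that $\phi$ is genuinely a Lausch morphism (not merely $\vec{M}$--linear) and that it is the unique such morphism. The crux identity is
\[ \ol{\nu(l,r,x)} = \ol{\nu(l,r,\tilde{f})} \lhd (x\psi) \qquad (x \leq \tilde{f}), \]
verified by unfolding the presheaf map of Proposition \ref{piex}: $\varphi^{\tilde{f}}_x(\ol{\nu(l,r,\tilde{f})}) = \ol{x \rhd \nu(l,r,\tilde{f}) \lhd x} = \ol{(x (l^{-1} \rho \tau), l, r, x)} = \ol{\nu(l,r,x)}$. Combined with Lausch axiom (ii) and $\vec{M}$--linearity this forces $\phi$ to commute with arbitrary presheaf maps, making it Lausch; moreover any Lausch morphism $\phi'$ extending $\mu$ must satisfy $\phi'(\ol{\nu(l,r,x)}) = \mu_f((l,r)) \lhd (x\psi) = \phi(\ol{\nu(l,r,x)})$, so agrees with $\phi$ on the $\vec{M}$--generators and therefore everywhere. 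The main obstacle is bridging the groupoid notion of module in Proposition \ref{free_cm_and_free_mod} with Lausch's richer definition, which encodes the presheaf maps; the identity above is the essential bridge, exhibiting the $R$--indexed $\vec{M}$--basis of $(S^{\x})^{ab}$ as a $\curlyZ$--indexed Lausch basis with each generator $\ol{\nu(l,r,\tilde{f})}$ redundantly translated by the idempotents below $\tilde{f}$.
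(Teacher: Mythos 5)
Your argument is correct in substance, and it reaches the conclusion by a genuinely different final step from the paper's. Both arguments begin identically: Theorem \ref{pres_fcm_gpd} together with Proposition \ref{free_cm_and_free_mod}(2) exhibits $(S^{\x})^{ab}$ as a free module over the quotient groupoid $\vec{\curlyT}/S^{\x}\R \cong \vec{M}$ on the basis function induced by $\omega$. From there the paper simply extends the $\vec{M}$--action to an $M$--action by $\ol{\alpha} \lhd m = \ol{w^{-1} \rhd \alpha \lhd w}$ (with $w\psi = m$), reads off the free abelian basis of the component at $e$, namely $Y_e = \{(l,r,m) : (l^{-1}r)\rho\tau\psi \geq mm^{-1},\ m^{-1}m = e\}$, and observes that this is verbatim the basis prescribed in Definition \ref{Lau_mod} for the component at $e$ of the free Lausch $M$--module on $\curlyZ$. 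You instead verify the Lausch universal property directly, which forces you to construct the extension $\nu'(l,r,x) = \mu_f((l,r)) \lhd (x\psi)$ and to prove the bridging identity $\ol{\nu(l,r,x)} = \ol{\nu(l,r,\tilde{f})} \lhd (x\psi)$ for $x \leq \tilde{f}$, exhibiting the generators below $\tilde{f}$ as presheaf translates of the single generator at $\tilde{f}$. Your route is longer, but it makes explicit something the paper's basis-matching leaves implicit: that the presheaf maps and the $M$--action, not merely the underlying free abelian groups, agree with those of the free Lausch module. Two small points of care: the step ``this forces $\phi$ to commute with arbitrary presheaf maps'' should be backed by noting that every element of $(\st_e^{\x})^{ab}$ is an integer combination of translates $\ol{\nu(l,r,x)} \lhd m$ with $m^{-1}m = e$, after which Lausch axiom (ii) applied to $\nu'$ finishes the verification; and $(l^{-1}r)\rho\tau$ need not itself be an idempotent of $\curlyT$ --- the relevant inequality is $(l^{-1}r)\rho\tau\psi \geq x\psi$ in $E(M)$ --- though this does not affect your (correct) computation $\omega\pi(l,r,x) = x\psi$.
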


\begin{proof}
The groupoid action of $\vec{M}$ extends to one of $M$.  If $\alpha \in \st^{\x}_e$ with image $\ol{\alpha} \in (\st^{\x}_e)^{ab}$, 
and $m \in M$ with $m = w \psi$, then we define 
\[ \ol{\alpha} \lhd m = \ol{w^{-1} \rhd \alpha \lhd w} \,. \]
As a component of the free $\vec{M}$--module $(S^{\x})^{ab}$, the group $(\st^{\x}_e)^{ab}$ is the free abelian
group with basis
\[ Y_e = \{ (l,r,m) : (l^{-1}r) \rho \tau \psi \geq mm^{-1}, m^{-1}m=e \} \,.\]
which is the correct basis for $(S^{\x})^{ab}$ as the free $M$--module on the $E(M)$--set $\curlyZ$.
\end{proof}

\subsection{A presentation for the relation module}
From an inverse monoid presentation $\P = [X:\curlyR]$ of an inverse monoid $M$ we have now constructed a free crossed module
$S^{\bowtie}  \xrightarrow{\R} \curlyT \rightrightarrows E$ and for each $e \in E$ we have a
crossed module of groups $\st^{\x}_e \xrightarrow{\R} K_e$.  Since $K_e$ is the vertex set of the component of
$\Sq(\P)$ containing $e$, the map $\R: \st^{\x}_e \ra K_e$ is surjective.  
By Propositions \ref{piex} and \ref{monoidal_Pi_invmon},
\[ \pi_e^{\x}= \{ \alpha \in \st^{\x}_e : \alpha \R = e \} \]
is  abelian and we have a short exact sequence of groups
\begin{equation}
\label{Ue_ses}
 0 \ra \pi_e^{\x} \ra \st^{\x}_e \ra K_e \ra 1 \,.
\end{equation}

\begin{lemma}
\label{Ue_splitting}
Each group $K_e$ is free, the sequence \eqref{Ue_ses} splits and, $\st^{\x}_e$ and $\pi_e^{\x} \times K_e$ are isomorphic
groups.
\end{lemma}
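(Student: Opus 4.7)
The plan is to establish the three assertions in turn, with the key ingredient being that $\pi_e^{\x}$ sits centrally inside $\st_e^{\x}$.

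For freeness of $K_e$: by Lemma \ref{wordplay}, $K_e \cong \pi_e = \pi_1(\SchL(M,X,e), e)$, and the fundamental group of any connected graph is free (collapse a spanning tree).

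For the splitting of \eqref{Ue_ses}: the range map restricts to a surjective group homomorphism $\R\colon \st_e^{\x} \to K_e$. Surjectivity follows because every $a \in K_e$ lies in the connected component of $e$ in $\Sq(\curlyP)$, and any edge-path $\gamma$ from $e$ to $a$ normalises to $\gamma^{\x} = e \rhd \gamma \lhd e \in \st_e^{\x}$ with $\gamma^{\x}\R = a$. Since $K_e$ is free, lifting any free basis produces a group-theoretic section $\sigma\colon K_e \to \st_e^{\x}$.

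The crucial step is the centrality of $\pi_e^{\x}$ in $\st_e^{\x}$. For $\alpha \in \pi_e^{\x}$, by definition $\alpha\R = e$, the identity of $K_e$. Applying axiom (CM2) for the crossed module $S^{\x} \xrightarrow{\R} \curlyT \rightrightarrows E$ (Proposition \ref{pres_cm_gpd_1}) to $\alpha, \beta \in \st_e^{\x}$ yields
\[ \alpha^{-1} \ast \beta \ast \alpha \;=\; \beta^{\alpha\R} \;=\; \beta^{e} \;=\; e \rhd \beta \lhd e \;=\; \beta, \]
where the final equality uses that every element of $\st_e^{\x}$ is already of the form $e \rhd (-) \lhd e$. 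Combining this with the section $\sigma$, the product map $\pi_e^{\x} \times K_e \to \st_e^{\x}$, $(\alpha, k) \mapsto \alpha \ast \sigma(k)$, is a group homomorphism (by centrality) and is bijective by the usual splitting argument: any $\beta \in \st_e^{\x}$ writes uniquely as $\bigl(\beta \ast \sigma(\beta\R)^{-1}\bigr) \ast \sigma(\beta\R)$ with the first factor in $\ker\R = \pi_e^{\x}$, and trivial intersection $\pi_e^{\x} \cap \sigma(K_e) = \{1_e\}$ follows by applying $\R$. The only real subtlety is the correct application of (CM2) in the groupoid setting: both $\alpha$ and $\beta$ must lie in the same local group $\st_e^{\x}$, and the idempotent $e \in V(\curlyT)$ acts trivially on $\st_e^{\x}$ by construction.
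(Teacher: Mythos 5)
Your proof is correct, and for the splitting and the direct-product decomposition it is essentially the paper's argument: both rest on axiom CM2 for the crossed module $\st_e^{\x}\xrightarrow{\R}K_e$ (you phrase this as centrality of $\pi_e^{\x}=\ker\R$ in $(\st_e^{\x},\ast)$; the paper phrases it as triviality of the $K_e$--action on $\pi_e^{\x}$, which is the same statement once one uses surjectivity of $\R$) together with projectivity of the free group $K_e$ to produce the section. Where you genuinely diverge is the freeness of $K_e$: you obtain it from Lemma \ref{wordplay}, identifying $K_e$ with $\pi_1(\SchL(M,X,e),e)$ and invoking freeness of fundamental groups of connected graphs, whereas the paper observes that the maximum group image map $\curlyT\to F(X)$ is idempotent pure (Corollary \ref{arboreal_is_Eunitary}), hence restricts to an embedding of the group $K_e$ into a free group, and concludes by Nielsen--Schreier. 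The two routes cost about the same: yours re-uses the topological content already established in Lemma \ref{wordplay} and bypasses the $E$--unitarity of $\curlyT$, while the paper's is internal to $\curlyT$ and does not pass through the Sch\"utzenberger graph. Your explicit check that $\beta^{e}=e\rhd\beta\lhd e=\beta$ on $\st_e^{\x}$, which is what licenses the application of CM2 when $\alpha\R=e$, is a worthwhile detail that the paper leaves implicit.
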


\begin{proof}
The group $K_e$ is a subgroup of $\curlyT$ and the maximum group image map $\sigma : \curlyT \ra F(X)$ is idempotent
pure.  Its restriction $\sigma : K_e \ra F(X)$  therefore has trivial kernel and so $K_e$ is isomorphic to a subgroup of a 
free group and is free.  By \eqref{cm_gpd_2}, $K_e$ acts trivially on $\pi_e^{\x}$ and so the splitting of the sequence 
\eqref{Ue_ses} induces an isomorphism $\st^{\x}_e \cong \pi_e^{\x} \times K_e$.
\end{proof}

\begin{thm}
\label{relmod_pres_im}
Let $\P$ be an inverse monoid presentation of an inverse monoid $M$.  There exists a short exact sequence of $M$--modules
\begin{equation}
\label{relmod_ses}
0 \ra \bigsqcup_{e \in E(M)} \pi_e^{\bowtie} \ra (S^{\bowtie})^{ab} \xrightarrow{\ol{\R}}  \curlyK \ra 0 
\end{equation}
in which $(S^{\bowtie})^{ab}$ is a free $M$--module and $\ol{\R}$ is induced by $S^{\bowtie}  \xrightarrow{\R} K$.
\end{thm}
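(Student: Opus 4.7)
The plan is to abelianise, for each idempotent $e \in E$, the split sequence \eqref{Ue_ses} established in Lemma \ref{Ue_splitting}, then assemble the results into a single sequence of $M$--modules. The construction of the sequence as abelian groups is nearly immediate; essentially all of the work lies in checking $M$--module compatibility.

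\textbf{Per-component sequence and assembly.} By Lemma \ref{Ue_splitting}, for each $e \in E$ we have an isomorphism $\st_e^{\x} \cong \pi_e^{\x} \times K_e$ with $\pi_e^{\x}$ abelian; abelianising gives a split short exact sequence of abelian groups
\[ 0 \ra \pi_e^{\x} \ra (\st_e^{\x})^{ab} \xrightarrow{\ol{\R}_e} K_e^{ab} \ra 0\,. \]
Since $\psi$ is idempotent separating, it restricts to a bijection $E(\curlyT) \to E(M)$, so taking a disjoint union over $e \in E(M)$ produces the claimed sequence of Clifford semigroups over $E(M)$.

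\textbf{$M$--module structure and freeness.} Because $S^{\x}\R = \bigsqcup_e K_e = \ker \psi$, the quotient groupoid in Proposition \ref{free_cm_and_free_mod} is precisely $M$; combining that proposition with Proposition \ref{free_log_mod} identifies $(S^{\bowtie})^{ab}$ as a free $M$--module, with action $\ol{\alpha} \lhd m = \ol{t^{-1} \rhd \alpha \lhd t}$ for any $t$ with $t\psi = m$. The $M$--action on $\curlyK$ is the conjugation action from Lemma \ref{ker_is_mod}. The calculation
\[ \ol{\R}(\ol{\alpha} \lhd m) = \ol{(t^{-1} \rhd \alpha \lhd t)\R} = \ol{t^{-1}(\alpha\R)t} = \ol{\R}(\ol{\alpha}) \lhd m \]
then shows $\ol{\R}$ is an $M$--module homomorphism, so its kernel is automatically an $M$--submodule.

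\textbf{Main obstacle.} The one subtle point is ensuring that the $\curlyT$--action on $(S^{\bowtie})^{ab}$ really descends through $\psi$, i.e.\ that elements of $\ker \psi = S^{\x}\R$ act trivially after abelianisation. This is precisely the content of Proposition \ref{free_cm_and_free_mod}: the crossed module axiom \eqref{cm_gpd_2} equates the action of $\gamma\R$ on $\ol{\alpha}$ with conjugation by $\gamma$ inside the group $\st_e^{\x}$, and conjugation vanishes after abelianisation. Once this descent is in hand, the three $M$--module structures on the kernel, middle term, and cokernel are automatically coherent, and the sequence assembled in Step~1 is a short exact sequence of $M$--modules as required.
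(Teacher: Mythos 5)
Your proposal is correct and follows essentially the same route as the paper's proof: abelianise the split sequence of Lemma \ref{Ue_splitting} componentwise, assemble over $E(M)$, take the $M$--module structures from Propositions \ref{piex} and \ref{free_log_mod} and Lemma \ref{ker_is_mod}, and verify $\ol{\R}$ is an $M$--module map by the computation $\ol{(t^{-1}\rhd\alpha\lhd t)\R}=\ol{t^{-1}(\alpha\R)t}$. Your extra remark on why the $\curlyT$--action descends to $M$ via CM2 is a correct elaboration of what the paper leaves implicit in its appeal to Proposition \ref{free_cm_and_free_mod}.
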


\begin{proof}
The $M$--module structure on $\bigsqcup_{e \in E(M)} \pi_e^{\bowtie}$ is given by Proposition \ref{piex}, that on 
$(S^{\x})^{ab}$ by Proposition \ref{free_log_mod}, and that on $\curlyK$ by Lemma \ref{ker_is_mod}.
Lemma \ref{Ue_splitting} gives us, for each $e \in E(M)$, a short exact sequence of abelian groups
\[ 0 \ra \pi_e^{\x} \ra \st^{\x}_e \xrightarrow{\ol{\partial}} K_e^{ab} \ra 0 \]
and these assemble into the sequence \eqref{relmod_ses}.  It remains to check that $\ol{\R}$ is then a map of
$M$--modules.  

Let $\alpha \in \st^{\x}_e$ with image $\ol{\alpha} \in (S^{\x})^{ab}$, and let $m \in M$ with $mm^{-1}=e$.
Then the action of $m$ on $\ol{\alpha}$ is defined by lifting $m$ to $\curlyT$ and acting on $\alpha$ in the crossed module
$S^{\bowtie}  \xrightarrow{\R} \curlyT$:
\[ \ol{\alpha} \lhd m = \ol{t^{-1} \rhd \alpha \lhd t} \]
where $t \psi =m$.  Hence
\[ (\ol{\alpha} \lhd m)  \ol{\R} = \ol{( t^{-1} \rhd \alpha \lhd t) \R} = \ol{t^{-1}(\alpha \R)t} \in U_{m^{-1}m}^{ab} \,.\]
But $\ol{\alpha} \; \ol{\R} = \ol{\alpha \R}$ and $\ol{\alpha \R} \lhd m = \ol{t^{-1}(\alpha \R)t}$.
 \end{proof}

\subsection*{Acknowledgements}
A version of these results is presented in the second author's PhD thesis at Heriot-Watt University, Edinburgh.  The generous
financial support of a PhD Scholarship from the Carnegie Trust for the Universities of Scotland is duly and gratefully acknowledged.

\end{document}